\documentclass[11pt,twoside, final]{amsart}
\copyrightinfo{0}{Iranian Mathematical Society}
\pagespan{1}{\pageref*{LastPage}}
\usepackage{etoolbox,lastpage}
\commby{}
\date{\scriptsize   Received: , Accepted: .}
\usepackage{amsmath,amsthm,amscd,amsfonts,amssymb,enumerate}
\usepackage{graphicx}		
\usepackage{color}
\usepackage[colorlinks]{hyperref}

\newtheorem{theorem}{Theorem}[section]

\newtheorem{lemma}[theorem]{Lemma}
\newtheorem{corollary}[theorem]{Corollary}
\theoremstyle{definition}

\theoremstyle{remark}

\numberwithin{equation}{section}

\def\B{\mathcal{B}}
\def\D{\mathbb{D}}

\def\T{\partial \D}

\def\msk{\medskip}

\def\H{\mathcal H}

\begin{document}

\title[Generalized Hilbert matrix]
{Essential norm of  generalized Hilbert matrix from Bloch type spaces to BMOA and Bloch space}
\author{Songxiao Li and Jizhen Zhou}
\address{Songxiao Li: Institute of Fundamental and Frontier Sciences, University of Electronic Science and Technology of China,
610054, Chengdu, Sichuan, P.R. China\newline
Institute of Systems Engineering, Macau University of Science and Technology, Avenida Wai Long, Taipa, Macau. }  \email{jyulsx@163.com (S. Li)}

\vskip 3mm
\address{Jizhen Zhou: School of Sciences\\
Anhui University of Science and Technology\\
 Huainan, Anhui 232001,  P.R.China}\email{hope189@163.com(J. Zhou)}


  \thanks{The first author  was partially supported by NSF of China (No. 11471143 and No. 11720101003).  The second author was  supported by NSF of Anhui Province, grant No. 1608085MA01.}
%

 \maketitle
%

\begin{abstract}
 Let $\mu$ be a positive Borel measure on the interval $[0,1)$. The Hankel matrix $\H_\mu=(\mu_{n+k})_{n,k\geq 0}$ with entries $\mu_{n,k}=\mu_{n+k}$ induces the operator
$$
\H_\mu(f)(z)=\sum^\infty_{n=0}\left(\sum^\infty_{k=0}\mu_{n,k}a_k\right)z^n
$$
on the space of all analytic functions $f(z)=\sum^\infty_{n=0}a_nz^n$ in the unit disk $\D$.
In this paper, we characterize the boundedness and compactness of $\H_\mu$   from Bloch type spaces to the BMOA and the Bloch space. Moreover we obtain the essential norm of $\H_\mu$ from $\B^\alpha$ to $\B$ and BMOA.\\
\textbf{Keywords:} Bloch type spaces, BMOA, p-Carleson measure, Hilbert operator, Essential norm.  \\
\textbf{MSC(2010):}  Primary: 47B38; Secondary: 30H30.
\end{abstract}

\section{\bf Introduction}
Denote by $H(\D)$ the space of all analytic functions on the unit disk $\D=\{z:|z|<1\}$ in the complex plane. For $0 < p \leq \infty$, we let $H^p$ denote the classical Hardy space.  If $f\in H(\D)$ and
$$
\|f\|_{BMOA}=|f(0)|+\sup_{a\in\D}\|f\circ\varphi_a-f(a)\|_{H^2}<\infty,
$$
then we say that $f\in BMOA$. Here $\varphi_a(z)=\frac{a-z}{1-\bar{a}z}, ~a\in \D, $
is a M\"obius transformation of $\D$. Fefferman's duality theorem says that $BMOA=(H^1)^*$. We refer \cite{g} about the theory of $BMOA$.

Let $0<\alpha<\infty$. An $f\in H(\D)$ is said to belong to the Bloch type space (or called the $\alpha-$Bloch space), denoted by  $\B^\alpha$,  if
$$
\|f\|_{\B^\alpha}=\sup_{z\in\D}|f'(z)|(1-|z|^2)^\alpha<\infty.
$$
The classical Bloch space $\B$ is just $\B^1$. It is clear that $\B^\alpha$ is a Banach space with the norm
$\|f\|=|f(0)|+\|f\|_{\B^\alpha}$. See \cite{zhu1} for the theory of Bloch type spaces.

For a subarc $I\subset \partial\mathbb{D}$, let $S(I)$ be the Carleson box based on $I$ with
$$S(I)=\{z\in \D: 1-|I|\leq |z|<1 \ \ and\  \frac{z}{|z|}\in I\}.$$
If $I=\T$, let $S(I)=\D$.   For $0<s<\infty$, we say that a positive Borel measure $\mu$ is an $s-$Carleson measure
on $\mathbb{D}$ if (see \cite{Du})
$$
\|\mu\|=\sup_{I\subset\partial\mathbb{D}}\frac{\mu(S(I))}{|I|^s}<\infty.
$$
We say that a positive Borel measure $\mu$ is a vanishing $s-$Carleson measure on $\D$ if
$$
\lim_{|I|\rightarrow 0}\frac{\mu(S(I))}{|I|^s}=0.
$$
Here and henceforth $\sup_{I\subset\partial\mathbb{D}}$ indicates the supremum taken over all subarcs $I$ of $\partial\mathbb{D}$. $|I|=(2\pi)^{-1}\int_I|d\xi|$ is the normalized length of the subarc $I$ of the unit circle $\T$. When $s=1$,   $\mu$ is called a Carleson measure on $\mathbb{D}$. It is well known that $\mu$ is a Carleson measure if and only if
$$
\left(\int_\D|f(z)|^pd\mu(z)\right)^{1/p}\leq\|\mu\|\|f\|_{H^p},
$$
for any $f\in H^p, 0<p<\infty$. See, for example,  \cite{G}.

A positive Borel measure $\mu$ on $[0,1)$ can be seen as a Borel measure on $\D$ by identifying it  with measure $\widetilde{\mu}$ defined by
$$
\widetilde{\mu}(E)=\mu(E\cap[0,1))
$$
for any Borel subset $E$ of $\D$. Then a positive Borel measure $\mu$ on [0,1) is an $s$-Carleson measure if there exists a constant $C>0$ such that (see \cite{gm})
$$
\mu([t,1))\leq C(1-t)^s.
$$
A vanishing $s-$Carleson measure on $[0,1) $ can be defined similarly.

Let $\mu$ be a finite positive measure on $[0,1)$ and $n=0,1,2,\cdots$. Denote $\mu_n$ the moment of order $n$ of $\mu$, that is,
$\mu_n=\int_{[0,1)} t^nd\mu(t).$  Let $\H_{\mu}$ be
the Hankel matrix $(\mu_{n,k})_{n,k\geq 0}$ with entries  $ \mu_{n,k}= \mu_{n+k}$.  The matrix $\H_{\mu}$ induces an operator, denoted also by  $\H_{\mu}$,   on $H(\D)$ by its action on the Taylor coefficient:
$$a_n\rightarrow\sum^\infty_{k=0}\mu_{n,k}a_k, n=0,1,2,\cdots.$$
 More precisely, if $f(z)=\sum^\infty_{k=0}a_kz^k\in H(\D)$,   then
$$
\H_\mu(f)(z)=\sum^\infty_{n=0}\left(\sum^\infty_{k=0}\mu_{n,k}a_k\right)z^n,    \eqno(1.5)
$$
whenever the right hand side makes sense and defines an analytic function in $\D$.

As in \cite{gp}, to obtain an integral representation of $\H_{\mu}$, we  write
\begin{equation}\label{gs1.1}
I_\mu(f)(z)=\int_{[0,1)}\frac{f(t)}{1-tz}d\mu(t),
\end{equation}
whenever the right hand side makes sense and defines an analytic function in $\D$.

If $\mu$ is the Lebesgue measure on $[0, 1)$, then the matrix $\H_\mu$ is just the classical Hilbert matrix $H = \big(\frac{1}{n+k+1}  \big)_{n,k\geq  0}$, which induces the classical Hilbert operator $H$. The Hilbert operator $H$ was studied in \cite{ams, bw, di,ds,djv,lnp}. A generalized Hilbert operator was studied in \cite{LS}.

The operator $\H_\mu$ acting on analytic functions spaces has been studied by many authors. Galanopoulos  and Pel\'aez  \cite{gp}
obtained a characterization that $\H_\mu$ is bounded or compact on $H^1$. Chatzifountas, Girela and Pel\'aez  \cite{cgp} described the measures $\mu$ for which $\H_\mu$  is bounded (compact) operator from $H^p$ into $H^q, 0<p,q<\infty$. See \cite{gm2} about the Hankel matrix acting on the Dirichlet spaces.

Let $X$ and $Y$ be two Banach spaces. The essential norm of a continuous linear operator $T$ between normed linear spaces $X$ and $Y$ is the distance to set of compact operators $K$, that is
$\|T\|^{X\rightarrow Y}_e=inf\{\|T-K\|: K$ is compact$\}$, where $\|\cdot\|$ is the operator norm. It is easy to see that $\|T\|^{X\rightarrow Y}_e=0$
if and only if $T$ is compact. See \cite{llx,zh} for the study of essential norm of some operators.

In \cite{gm, gm1}, Girela  and Merch\'an  studied the operator $\H_\mu$ acting on spaces of analytic functions on $\D$ such as the Bloch space, BMOA, the Besov spaces and Hardy spaces.  Motivated by \cite{gm, gm1}, in this paper, we characterize the boundedness and compactness of   $\H_\mu$  from $\B^\alpha$ to the $BMOA$ and the Bloch space when $0<\alpha<1$ and $\alpha>1$. Moreover we characterize the essential norm of $\H_\mu$ from $\B^\alpha$ to $\B$ and BMOA.

In this paper, $C$ denotes a constant which may be different in each case.

\section{The operator $\H_\mu:\B^\alpha\rightarrow BMOA$ ($\B$), $0<\alpha<1$}

In this section, we  characterize the boundedness of $\H_\mu$ from  $\B^\alpha$ into the $BMOA$ and the Bloch space when $0<\alpha<1$. For this purpose, we need some auxiliary results.\msk

\begin{lemma}\label{lem1} \cite{zhu1} If $0<\alpha<1$, then $f\in \B^\alpha$ are bounded. If $\alpha>1$, then $f\in\B^\alpha$ if and only if there exists some constant $C$ such that
$$
|f(z)|\leq \frac{C}{(1-|z|^2)^{\alpha-1}}.
$$
\end{lemma}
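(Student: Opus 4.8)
The plan is to derive both statements from the pointwise derivative bound $|f'(z)| \le \|f\|_{\B^\alpha}(1-|z|^2)^{-\alpha}$, which is immediate from the definition of the $\B^\alpha$ norm. To pass to $f$ itself, I would integrate $f'$ along the radius: writing $z=re^{i\theta}$ and using $f(z)-f(0)=\int_0^r f'(se^{i\theta})e^{i\theta}\,ds$, one obtains
$$
|f(z)|\le |f(0)|+\|f\|_{\B^\alpha}\int_0^r\frac{ds}{(1-s^2)^\alpha}.
$$
Since $1-s^2=(1-s)(1+s)$ with $1\le 1+s\le 2$, the integrand is comparable to $(1-s)^{-\alpha}$, so everything reduces to evaluating $\int_0^r(1-s)^{-\alpha}\,ds$.

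The two regimes then separate cleanly. When $0<\alpha<1$ we have $1-\alpha>0$, so $\int_0^r(1-s)^{-\alpha}\,ds=\frac{1-(1-r)^{1-\alpha}}{1-\alpha}$ stays bounded by $\frac{1}{1-\alpha}$ uniformly in $r$; this yields a uniform bound on $|f(z)|$ and proves that every $f\in\B^\alpha$ is bounded. When $\alpha>1$ the same antiderivative gives $\int_0^r(1-s)^{-\alpha}\,ds=\frac{(1-r)^{1-\alpha}-1}{\alpha-1}$, whose dominant term is comparable to $(1-r)^{-(\alpha-1)}$. Absorbing the $|f(0)|$ term (here I use $(1-|z|^2)^{-(\alpha-1)}\ge 1$, valid since $\alpha-1>0$) and replacing $1-|z|$ by the comparable quantity $1-|z|^2$ gives the desired estimate $|f(z)|\le C(1-|z|^2)^{-(\alpha-1)}$, establishing the forward implication for $\alpha>1$.

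For the converse when $\alpha>1$, I would recover the derivative bound from the growth bound via a Cauchy estimate on a disk of hyperbolic size. Fixing $z\in\D$ and setting $\rho=\tfrac12(1-|z|)$, the disk $D(z,\rho)$ lies in $\D$, and every $w$ in it satisfies $1-|w|^2\ge\tfrac14(1-|z|^2)$; hence the hypothesis yields $|f(w)|\le C\,4^{\alpha-1}(1-|z|^2)^{-(\alpha-1)}$ throughout $D(z,\rho)$. The Cauchy inequality $|f'(z)|\le\rho^{-1}\sup_{D(z,\rho)}|f|$ then introduces a factor $(1-|z|)^{-1}$, and using $(1-|z|)^{-1}\le 2(1-|z|^2)^{-1}$ converts this into $|f'(z)|(1-|z|^2)^\alpha\le C'$, i.e. $f\in\B^\alpha$.

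I anticipate the only genuinely nontrivial ingredient to be the Cauchy estimate in the converse direction, together with the delicate bookkeeping of the comparability constants between $1-|z|$ and $1-|z|^2$ near the boundary and the correct choice of $\rho$ ensuring $D(z,\rho)\subset\D$ while keeping $1-|w|^2$ comparable to $1-|z|^2$. Once those comparisons are fixed, the remaining computations are routine.
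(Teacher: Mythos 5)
Your proof is correct. The paper itself offers no argument for this lemma --- it is quoted from the cited reference \cite{zhu1} --- and what you have written is precisely the standard proof found there: radial integration of the bound $|f'(z)|\le \|f\|_{\B^\alpha}(1-|z|^2)^{-\alpha}$, with the exponent $1-\alpha$ deciding whether the integral $\int_0^r (1-s)^{-\alpha}\,ds$ stays bounded ($0<\alpha<1$) or grows like $(1-r)^{1-\alpha}$ ($\alpha>1$), and a Cauchy estimate on a disk of radius $\tfrac12(1-|z|)$ for the converse when $\alpha>1$. All of your comparability constants check out, so nothing needs to be added.
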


The following lemma can be found  in \cite{xi1} (see Corollary 3.3.1 in \cite{xi1}).\msk

\begin{lemma}\label{lem2} If $a_n\downarrow 0$ , then $f(z)=\sum^\infty_{n=0}a_nz^n\in\B$ if and only if
$\sup_nna_n<\infty.$
\end{lemma}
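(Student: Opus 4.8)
The plan is to prove the two implications separately, working throughout with the definition $\|f\|_\B=\sup_{z\in\D}|f'(z)|(1-|z|^2)$ and exploiting that the coefficients are nonnegative and nonincreasing. The easy direction is to assume $\sup_n na_n=M<\infty$ and estimate $f'$ directly. Since $f'(z)=\sum_{n=1}^\infty na_nz^{n-1}$, the bound $na_n\le M$ together with the triangle inequality gives $|f'(z)|\le M\sum_{n=1}^\infty|z|^{n-1}=M/(1-|z|)$, whence $|f'(z)|(1-|z|^2)\le M(1+|z|)\le 2M$. This shows $f\in\B$ and costs essentially nothing.

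The content is in the converse, so the hard part will be to extract the coefficient bound $na_n\lesssim\|f\|_\B$ from the Bloch condition. Here I would use that every term $na_nr^{n-1}$ is nonnegative when $r\in(0,1)$, so I may freely discard tail terms and retain a controlled lower bound. Fixing $N\ge 2$ and evaluating at the real point $r=1-1/N$, I would first drop all terms with $n>N$ to get $f'(r)\ge\sum_{n=1}^N na_nr^{n-1}$. Monotonicity then gives $a_n\ge a_N$ for $n\le N$, and since $r^{n-1}\ge r^{N-1}=(1-1/N)^{N-1}$ is bounded below by an absolute constant $c>0$ (the sequence decreases to $e^{-1}$), I obtain $f'(r)\ge c\,a_N\sum_{n=1}^N n\ge c'a_NN^2$.

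Finally I would feed this into the Bloch inequality. Since $f'(r)\ge 0$ and $1-r^2=(1-r)(1+r)\ge 1/N$, the estimate above yields $\|f\|_\B\ge f'(r)(1-r^2)\ge c'a_NN^2\cdot N^{-1}=c'Na_N$, i.e. $Na_N\le\|f\|_\B/c'$ for every $N$. Taking the supremum over $N$ gives $\sup_n na_n<\infty$, completing the proof. The only delicate point is the uniform lower bound on $(1-1/N)^{N-1}$, but this is a standard elementary fact, so I expect no genuine obstacle beyond careful bookkeeping of the constants.
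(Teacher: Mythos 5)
Your proof is correct, but there is nothing in the paper to compare it against: the paper does not prove Lemma \ref{lem2} at all, it simply quotes it from Xiao's monograph (Corollary 3.3.1 of \cite{xi1}). Your argument is the standard self-contained elementary proof of this folklore fact, and both halves check out. The easy direction via $|f'(z)|\le M\sum_{n\ge 1}|z|^{n-1}=M/(1-|z|)$, hence $(1-|z|^2)|f'(z)|\le M(1+|z|)\le 2M$, is exactly right. For the converse, your three ingredients — nonnegativity of the terms $na_nr^{n-1}$ (which lets you discard the tail $n>N$), monotonicity ($a_n\ge a_N$ for $n\le N$), and the uniform lower bound $r^{N-1}=(1-1/N)^{N-1}\ge e^{-1}$ — combine correctly: at $r=1-1/N$ one gets $f'(r)\ge a_N r^{N-1}\sum_{n=1}^N n\ge \frac{1}{2e}a_N N^2$, and since $f'(r)\ge 0$ and $1-r^2\ge 1-r=1/N$, the Bloch seminorm dominates $\frac{1}{2e}Na_N$, giving $\sup_N Na_N\lesssim\|f\|_{\B}$. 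The one point you flagged as delicate is indeed fine and uniform: $(1-1/N)^{N-1}=\bigl(1+\frac{1}{N-1}\bigr)^{-(N-1)}>e^{-1}$ because $(1+1/m)^m<e$ for every $m\ge 1$. In short, your proof is sound and could stand in place of the citation, which is arguably a service to the reader since the paper leaves this step entirely to the literature.
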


\begin{theorem}\label{theorem1} Let $\mu$ be a positive measure on $[0,1)$ and $0<\alpha<1$. Then the following statements are equivalent.
\begin{enumerate}
\item[(1)] The operator $\H_\mu$ is bounded from $\B^\alpha$ into $\B$.

\item[(2)] The operator $\H_\mu$ is compact from $\B^\alpha$ into $\B$.

\item[(3)] The operator $\H_\mu$ is bounded from $\B^\alpha$ into $BMOA$.

\item[(4)] The operator $\H_\mu$ is compact from $\B^\alpha$ into $BMOA$.

\item[(5)] The measure $\mu$ is a Carleson measure.
\end{enumerate}
\end{theorem}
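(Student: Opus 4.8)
For the proof I plan to close the cycle $(5)\Rightarrow(4)\Rightarrow(3)\Rightarrow(1)\Rightarrow(5)$ together with $(4)\Rightarrow(2)\Rightarrow(1)$, so that only two arguments carry real content. The remaining four implications are immediate: the continuous inclusion $BMOA\subset\B$ gives $\|g\|_\B\le C\|g\|_{BMOA}$, so bounded/compact maps into $BMOA$ are bounded/compact into $\B$, yielding $(3)\Rightarrow(1)$ and $(4)\Rightarrow(2)$; and a compact operator is bounded, giving $(4)\Rightarrow(3)$ and $(2)\Rightarrow(1)$. As a preliminary I would record that a Carleson measure on $[0,1)$ is finite and that each $f\in\B^\alpha$ is bounded by Lemma \ref{lem1}, so Fubini justifies $\H_\mu f=I_\mu f$ with $I_\mu$ as in \eqref{gs1.1}; differentiating,
$$(\H_\mu f)'(z)=\int_{[0,1)}\frac{t\,f(t)}{(1-tz)^2}\,d\mu(t).$$

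For the necessity $(1)\Rightarrow(5)$ I would test on the constant function $f_0\equiv1$, which lies in $\B^\alpha$ with $\|f_0\|=1$. Evaluating at a real point $z=r\in[1/2,1)$ and discarding the contribution of $[0,r)$, the inequality $1-tr\le 1-r^2$ for $t\ge r$ gives
$$(\H_\mu f_0)'(r)=\int_{[0,1)}\frac{t}{(1-tr)^2}\,d\mu(t)\ge\frac{r}{(1-r^2)^2}\,\mu([r,1)).$$
Multiplying by $1-r^2$ and invoking the assumed Bloch bound $|(\H_\mu f_0)'(r)|(1-r^2)\le\|\H_\mu\|$ yields $\mu([r,1))\le C(1-r)$ for $r$ near $1$; for $r$ bounded away from $1$ the estimate is automatic from finiteness of $\mu$. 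Hence $\mu$ is a Carleson measure.

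The substance is the sufficiency $(5)\Rightarrow(4)$, which I split into a norm estimate and a compactness upgrade. Assuming $\mu$ Carleson, I would first prove $\|I_\mu f\|_{BMOA}\le C\|\mu\|\,\|f\|_\infty$ for bounded $f$ by means of the Carleson-measure characterization of $BMOA$ (see \cite{g}): it suffices to verify that $|(I_\mu f)'(z)|^2(1-|z|^2)\,dA(z)$ is a Carleson measure on $\D$. Inserting the formula for $(I_\mu f)'$, bounding $|f(t)|\le\|f\|_\infty$, and applying Fubini reduces the Carleson-box integral $\int_{S(I)}|(I_\mu f)'|^2(1-|z|^2)\,dA$ to standard estimates for integrals of the form $\int_\D\frac{(1-|z|^2)^{b}}{|1-tz|^{c}}\,dA(z)$ combined with the one-dimensional condition $\mu([1-h,1))\le\|\mu\|h$; these together give the bound $C\|f\|_\infty^2\|\mu\|^2|I|$. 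Since $\|f\|_\infty\le C\|f\|$ for $f\in\B^\alpha$ by Lemma \ref{lem1}, this already establishes the boundedness statement $(3)$.

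To obtain compactness I would use the sequential criterion, showing $\|\H_\mu f_n\|_{BMOA}\to0$ whenever $(f_n)$ is bounded in $\B^\alpha$ and $f_n\to0$ uniformly on compact subsets of $\D$. The decisive feature of the range $0<\alpha<1$ is that the unit ball of $\B^\alpha$ is relatively compact in $C(\overline{\D})$: the bound $|f'(z)|\le(1-|z|^2)^{-\alpha}$ together with $\int_0^1(1-s)^{-\alpha}\,ds<\infty$ forces the functions to be uniformly bounded and equicontinuous up to the boundary (equivalently $\B^\alpha=\Lambda_{1-\alpha}$, cf. \cite{zhu1}), so Arzel\`a--Ascoli applies. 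Consequently a bounded sequence in $\B^\alpha$ converging to $0$ on compacta converges to $0$ uniformly on $\overline{\D}$, so $\|f_n\|_\infty\to0$, and the estimate of the previous step gives $\|\H_\mu f_n\|_{BMOA}\le C\|\mu\|\,\|f_n\|_\infty\to0$. I expect the main obstacle to be precisely the $BMOA$ norm estimate, since it requires upgrading the one-dimensional Carleson condition to a Carleson-box estimate for $|(I_\mu f)'|^2(1-|z|^2)\,dA$; by contrast the compactness is essentially free once the compact embedding $\B^\alpha\hookrightarrow C(\overline{\D})$ is available, and this is exactly the reason boundedness and compactness coincide in this range.
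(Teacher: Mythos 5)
Your proposal is correct in outline, and its logical skeleton (reduce everything to (1)$\Rightarrow$(5) and (5)$\Rightarrow$(3),(4); get the remaining implications from the continuous inclusion $BMOA\subset\B$ and from ``compact implies bounded'') is the same as the paper's, but the two substantive implications are proved by genuinely different means. For (1)$\Rightarrow$(5) you test on $f\equiv 1$ exactly as the paper does, but you extract the Carleson condition from the derivative $(\H_\mu 1)'(r)=\int_{[0,1)} t(1-tr)^{-2}\,d\mu(t)$ at real points $r$, whereas the paper applies the coefficient criterion of Lemma \ref{lem2} (for decreasing coefficients) to the moment sequence $\{\mu_n\}$; your version is marginally more self-contained since it does not need Lemma \ref{lem2}. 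The real divergence is in (5)$\Rightarrow$(3),(4). The paper never looks at Carleson boxes in $\D$: it pairs $I_\mu f$ against $g\in H^1$ through identity \eqref{gs2.1}, bounds $\bigl|\int_{[0,1)} f(t)\overline{g(rt)}\,d\mu(t)\bigr|\le \|f\|_{\B^\alpha}\|\mu\|\|g\|_{H^1}$ by the Carleson embedding $H^1\subset L^1(d\mu)$, and concludes via Fefferman's duality $BMOA=(H^1)^*$; the Carleson hypothesis enters exactly once and the whole estimate takes three lines. You instead go through the Carleson-measure characterization of $BMOA$, which forces you to show that $|(I_\mu f)'(z)|^2(1-|z|^2)\,dA(z)$ is a Carleson measure with norm $\lesssim\|f\|_\infty^2\|\mu\|^2$. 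Your compactness step, by contrast, is the same as the paper's: both rest on the fact that for $0<\alpha<1$ a bounded sequence in $\B^\alpha$ tending to $0$ on compacta tends to $0$ uniformly on $\D$ (the paper cites Lemma 3.2 of \cite{zhang}; your Arzel\`a--Ascoli argument proves exactly this). What your route buys is an explicit quantitative bound obtained without invoking duality; what it costs is a genuine area-integral computation that the paper's argument bypasses entirely.

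One caution about that computation, since your sketch understates it. The pointwise bound $|(I_\mu f)'(z)|\lesssim\|f\|_\infty\|\mu\|/(1-|z|)$, which is what the one-dimensional condition $\mu([1-h,1))\le\|\mu\|h$ gives you most directly, is \emph{not} sufficient: plugging it into the box integral produces $\int_{S(I)}(1-|z|)^{-1}dA(z)$, which diverges. To make your step work one has to split $\mu$ at the scale $1-t\approx|I|$: the part supported on $\{1-t\gtrsim|I|\}$ satisfies $|1-tz|\gtrsim 1-t$ on $S(I)$ and is handled by a dyadic summation, while for the part supported on $\{1-t\lesssim|I|\}$ one expands the square, applies Fubini, and uses estimates such as $\int_\D(1-|z|^2)|1-tz|^{-2}|1-sz|^{-2}dA(z)\lesssim(1-t)^{-1/2}(1-s)^{-1/2}$ together with $\int_{[1-2|I|,1)}(1-t)^{-1/2}d\mu(t)\lesssim\|\mu\|\,|I|^{1/2}$, the latter again a consequence of the Carleson condition. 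This can all be done, so there is no error in your plan, but it is the hardest point of your proof and the place where the paper's duality trick is a significant simplification.
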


\begin{proof}

 (1)$\Rightarrow$ (5). Assume that the operator $\H_\mu$ is bounded from $\B^\alpha$ into $\B$. Let $f(z)=1\in\B^\alpha, 0<\alpha<1$. Then
$$
\H_\mu(f)(z)=\sum^\infty_{n=0}\left(\sum^\infty_{k=0}\mu_{n,k}a_k\right)z^n=\sum^\infty_{n=0}\mu_{n,0}z^n\in\B.
$$
Note that $\mu_{n,0}$ is positive and decreasing. For any $0<\lambda<1$, we choose $n$ such that $1-\frac{1}{n}<\lambda<1-\frac{1}{n+1}$. Lemma \ref{lem2} gives that
$$
\infty>n\mu_{n,0}=n\int^1_0t^nd\mu(t)\geq n\lambda^n\int^1_\lambda d\mu(t)\geq\frac{\mu([\lambda,1))}{e(1-\lambda)}.
$$
The above estimate gives that $\mu$ is a Carleson measure.

(5)$\Rightarrow$(3). Assume that $\mu$ is a Carleson measure. Lemma \ref{lem1} implies that $\B^\alpha$ is a subspace of $H^1$ for $0<\alpha<1$. Then $\H_\mu$ is well defined for all $z\in\D$ and $\H_\mu(f)$ is an analytic function in $\D$ by Proposition 1.1 in \cite{gp}. Moreover, $\H_\mu(f)=I_\mu(f)$ for any $f\in\B^\alpha,0<\alpha<1$.

Fixed any given $f\in\B^\alpha$ for $0<\alpha<1$,  then
$$\int_{[0,1)}|f(t)|d\mu(t)\leq\|f\|_{\B^\alpha}\int_{[0,1)}d\mu(t)<\infty.$$ Then we have
$$
\int^{2\pi}_0\int_{[0.1)}\left|\frac{f(t)g(e^{i\theta})}{1-rte^{i\theta}}\right|d\mu(t)d\theta<\infty
$$
for any $f\in\B^\alpha, g\in H^1, 0<r<1$ and $0<\alpha<1$. It is easy to obtain that
\begin{equation}\label{gs2.1}
\int^{2\pi}_0I_\mu(f)(re^{i\theta})\overline{g(e^{i\theta})}d\theta=\int_{[0.1)}f(t)\overline{g(rt)}d\mu(t)
\end{equation}
whenever $f\in\B^\alpha, g\in H^1$ and $0<\alpha<1$. The reader can refer the proof of Theorem 2.2 in \cite{gm}.
Using \eqref{gs2.1}, we have
\begin{equation}\nonumber
\begin{split}
\left|\int^{2\pi}_0I_\mu(f)(re^{i\theta})\overline{g(e^{i\theta})}d\theta\right|=&\left|\int_{[0.1)}f_n(t)\overline{g(rt)}d\mu(t)\right|\\
\leq &\|f\|_{\B^\alpha}\int_{[0.1)}|g(rt)|d\mu(t)\\
\leq &\|\mu\|\|f\|_{\B^\alpha}\int^{2\pi}_0|g(re^{i\theta})|d\theta\\
\leq &\|\mu\|\|f\|_{\B^\alpha}\|g\|_{H^1}.\\
\end{split}
\end{equation}
We obtain $\H_\mu(f)=I_\mu(f)\in BMOA$ for any $f\in\B^\alpha$ by Fefferman's duality Theorem.

(5)$\Rightarrow$(4). Assume that $\mu$ is a Carleson measure. Then $\H_\mu$ is bounded from $\B^\alpha$ to $BMOA$ and $\H_\mu(f)=I_\mu(f)$ for any $f\in\B^\alpha,0<\alpha<1$.
Let $\{f_n\}$ be any sequence with $\sup_n\|f_n\|_{\B^\alpha}\leq 1$ and $\lim_{n\rightarrow\infty}f_n(z)=0$ on any compact subset of $\D$.
Then we have $\sup_{z\in\D}|f_n(z)|\rightarrow 0$ as $n\rightarrow\infty$ by Lemma 3.2 in \cite{zhang}.
Applying \eqref{gs2.1} again, we have
\begin{equation}\nonumber
\begin{split}
\left|\int^{2\pi}_0I_\mu(f_n)(re^{i\theta})\overline{g(e^{i\theta})}d\theta\right|=&\left|\int_{[0.1)}f_n(t)\overline{g(rt)}d\mu(t)\right|\\
\leq &\sup_{0<t<1}|f_n(t)|\int_{[0.1)}|g(rt)|d\mu(t)\\
\leq &\sup_{0<t<1}|f_n(t)|\|\mu\|\|g\|_{H^1}.\\
\end{split}
\end{equation}
Then
$$
\lim_{n\rightarrow\infty}\int^{2\pi}_0I_\mu(f_n)(re^{i\theta})\overline{g(e^{i\theta})}d\theta=0.
$$
This prove that $\lim_{n\rightarrow\infty}\H_\mu(f_n)=\lim_{n\rightarrow\infty}I_\mu(f_n)=0$. So $\H_\mu$ is compact.

The other cases are trivial. The proof is complete.
\end{proof}

\begin{corollary}\label{cor1} Let $\mu$ be a positive Borel measure on $[0,1)$. If $\H_\mu$ is bounded from $\B^\alpha$ to $\B$ for any $0<\alpha<1,$ then
$$
\|\H_\mu\|^{\B^\alpha\rightarrow\B}_e=\|\H_\mu\|^{\B^\alpha\rightarrow BMOA}_e =0.
$$

\end{corollary}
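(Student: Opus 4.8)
The plan is to observe that this corollary is an immediate consequence of Theorem \ref{theorem1} together with the standard fact, recorded in the introduction, that $\|T\|_e^{X\rightarrow Y}=0$ if and only if $T$ is compact. In other words, all of the analytic content has already been carried out in the proof of Theorem \ref{theorem1}, and the corollary is essentially a bookkeeping statement. There is no real obstacle to overcome here; the only point requiring minor care is the interplay between the hypothesis ``for any $0<\alpha<1$'' and the per-$\alpha$ nature of the equivalences in Theorem \ref{theorem1}.

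First I would fix $\alpha$ with $0<\alpha<1$ and note that the hypothesis, that $\H_\mu$ is bounded from $\B^\alpha$ into $\B$, is precisely statement (1) of Theorem \ref{theorem1}. Since statements (1)--(5) of that theorem are all equivalent, the boundedness in (1) already forces statement (5), namely that $\mu$ is a Carleson measure. This is the crucial structural feature being exploited: in this range of $\alpha$ mere boundedness is strong enough to pin down the Carleson condition, which in turn upgrades automatically to compactness.

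Next, running the equivalences forward from (5), I obtain statement (2), that $\H_\mu$ is compact from $\B^\alpha$ into $\B$, and statement (4), that $\H_\mu$ is compact from $\B^\alpha$ into $BMOA$. Applying the criterion $\|T\|_e=0\iff T$ compact to $T=\H_\mu$ with target space $\B$ yields $\|\H_\mu\|_e^{\B^\alpha\rightarrow\B}=0$, and applying it with target space $BMOA$ yields $\|\H_\mu\|_e^{\B^\alpha\rightarrow BMOA}=0$. Since $\alpha\in(0,1)$ was arbitrary, both identities hold for every such $\alpha$, which is exactly the assertion of the corollary.

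The step I would expect to be the only one worth stating explicitly is the passage from (1) to (5) to (2) and (4) inside Theorem \ref{theorem1}; everything else is the tautology that vanishing essential norm is synonymous with compactness. Thus the proof reduces to two sentences citing Theorem \ref{theorem1} and the essential-norm characterization, with no estimates to grind through.
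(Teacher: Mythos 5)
Your proposal is correct and matches the paper's intent exactly: the paper states Corollary \ref{cor1} without a separate proof, precisely because it is an immediate consequence of the equivalences (1)$\Leftrightarrow$(2)$\Leftrightarrow$(4)$\Leftrightarrow$(5) in Theorem \ref{theorem1} combined with the fact that $\|T\|_e=0$ if and only if $T$ is compact. Your handling of the per-$\alpha$ nature of the hypothesis (fixing an arbitrary $\alpha\in(0,1)$ and running the equivalences) is the same bookkeeping the paper implicitly relies on, so there is nothing to add.
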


\section{\bf The operator $\H_\mu:\B^\alpha\rightarrow BMOA$ ($\B$), $\alpha>1$}
In this section, we will give the essential norm of the operator $\H_\mu$ from $\B^\alpha$ to $BMOA$ and $\B$ for $\alpha>1$. The following lemma will be needed in the proof of the main results.\msk

\begin{lemma}\label{lem3}  Let $\mu$ be a positive Borel measure on $[0,1)$ and $\alpha>1$. Then the following conditions are equivalent.
\begin{enumerate}
\item[(1)] $\int_{[0,1)}(1-t)^{1-\alpha}d\mu(t)<\infty$.

\item[(2)] For any given $f\in\B^\alpha$, the integral in \eqref{gs1.1} converges for all $z\in\D$ and the resulting function $I_\mu(f)$ is analytic on $\D$.
\end{enumerate}
\end{lemma}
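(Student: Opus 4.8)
The plan is to establish the two implications separately, leaning in both directions on the growth estimate of Lemma~\ref{lem1}: for $\alpha>1$ every $f\in\B^\alpha$ satisfies $|f(z)|\leq C(1-|z|^2)^{1-\alpha}$, and since $1-t^2\geq 1-t$ and $1-\alpha<0$ this yields the one-dimensional bound $|f(t)|\leq C(1-t)^{1-\alpha}$ for $t\in[0,1)$.

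For the implication (1)$\Rightarrow$(2), I would fix $f\in\B^\alpha$ and a compact set $K\subset\D$ with $r:=\sup_{z\in K}|z|<1$. For $z\in K$ and $t\in[0,1)$ one has $|1-tz|\geq 1-tr\geq 1-r$, so that
$$\int_{[0,1)}\left|\f{f(t)}{1-tz}\right|d\mu(t)\leq\f{C}{1-r}\int_{[0,1)}(1-t)^{1-\alpha}d\mu(t)<\infty$$
by hypothesis (1), uniformly for $z\in K$. Thus the integral defining $I_\mu(f)$ converges absolutely and locally uniformly on $\D$. Since $z\mapsto f(t)/(1-tz)$ is analytic on $\D$ for each fixed $t$, analyticity of $I_\mu(f)$ follows from Morera's theorem together with Fubini's theorem, interchanging $\int_{[0,1)}d\mu$ with the contour integral over any triangle in $\D$, which is legitimate thanks to the uniform bound above; equivalently one may expand $1/(1-tz)=\sum_n t^nz^n$ and justify the interchange of sum and integral by the same domination, obtaining a power series whose coefficients $\int_{[0,1)}f(t)t^n\,d\mu(t)$ are bounded in modulus, hence radius of convergence at least $1$.

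For the converse (2)$\Rightarrow$(1), the key is to test the hypothesis against a single well-chosen function, namely $f(z)=(1-z)^{1-\alpha}$ (principal branch), which is analytic on $\D$. A direct computation gives $f'(z)=(\alpha-1)(1-z)^{-\alpha}$, and using $1-|z|\leq|1-z|$ one obtains
$$|f'(z)|(1-|z|^2)^\alpha=(\alpha-1)\left(\f{1-|z|^2}{|1-z|}\right)^\alpha\leq(\alpha-1)2^\alpha,$$
so $f\in\B^\alpha$. Hypothesis (2) then guarantees that $I_\mu(f)(z)$ converges for every $z\in\D$; evaluating at $z=0$ and noting that $f(t)=(1-t)^{1-\alpha}\geq 0$ for $t\in[0,1)$, so that convergence of the integral is the same as its finiteness, yields exactly
$$I_\mu(f)(0)=\int_{[0,1)}(1-t)^{1-\alpha}d\mu(t)<\infty,$$
which is (1).

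I expect the only genuinely delicate point to be the analyticity claim in (1)$\Rightarrow$(2): one must record the \emph{locally uniform} integrability bound before invoking Morera/Fubini, since a merely pointwise bound would not suffice. The converse, by contrast, is conceptually immediate once the test function $f(z)=(1-z)^{1-\alpha}$ is identified, because its growth on $[0,1)$ matches the weight $(1-t)^{1-\alpha}$ precisely and evaluation at the origin annihilates the kernel $1/(1-tz)$.
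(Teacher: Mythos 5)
Your proof is correct and follows essentially the same route as the paper's: the implication (1)$\Rightarrow$(2) uses the growth bound $|f(t)|\lesssim (1-t)^{1-\alpha}$ from Lemma~\ref{lem1} to get absolute convergence and then analyticity via the power series expansion of the kernel $1/(1-tz)$, and (2)$\Rightarrow$(1) tests against $f(z)=(1-z)^{1-\alpha}$ and evaluates $I_\mu(f)$ at $z=0$. The only differences are cosmetic: you supply details the paper omits, namely the explicit verification that $(1-z)^{1-\alpha}\in\B^\alpha$ and the locally uniform bound justifying Morera/Fubini.
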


\begin{proof} (1)$\Rightarrow$(2). We assume that (1) holds. Lemma \ref{lem1} gives
\begin{equation}\label{gs3.1}
\int_{[0,1)}|f(t)|d\mu(t)\leq C\|f\|_{\B^\alpha}\int_{[0,1)}(1-t^2)^{1-\alpha}d\mu(t)\leq C\|f\|_{\B^\alpha}.
\end{equation}
This implies that
$$
\int_{[0,1)}\frac{|f(t)|}{|1-tz|}d\mu(t)\leq  C\frac{\|f\|_{\B^\alpha}}{1-|z|}
$$
for any $f\in \B^\alpha$ and $z\in\D$. By \eqref{gs3.1} we have
\begin{equation}\label{gs3.2}
\sup_{n\geq 0}\left|\int_{[0,1)}t^nf(t)d\mu(t)\right|<\infty.
\end{equation}
\eqref{gs3.2} and Fubini's Theorem give that the integral $\int_{[0,1)}\frac{f(t)}{1-tz}d\mu(t)$ converges absolutely for any fixed $z\in\D$. Then we have
$$
\int_{[0,1)}\frac{f(t)}{1-tz}d\mu(t)=\sum^\infty_{n=0}\left(\int_{[0,1)}t^nf(t)d\mu(t)\right)z^n, \quad z\in\D.
$$
Hence $I_{\mu}(f)$ is a well defined analytic function in $\D$ and
$$
I_\mu(f)(z)=\sum^\infty_{n=0}\left(\int_{[0,1)}t^nf(t)d\mu(t)\right)z^n, \quad z\in\D.
$$

 (2)$\Rightarrow$(1). Let $f(z)=(1-z)^{1-\alpha}$. Then $f$ belongs to $\B^\alpha$. So $I_{\mu}(f)$ is well defined for every $z\in\D$. In particular,
 $$
 I_\mu(f)(0)=\int_{[0,1)}(1-t)^{1-\alpha}d\mu(t)
 $$
 is a complex number. Since $\mu$ is a positive Borel measure on $[0,1)$, we get the desired result.  The proof is complete.
 \end{proof}

\begin{lemma}\label{lem4}  Let $\mu$ be a positive measure on $[0,1)$ and $\alpha>1$.  Let $v$ be the positive measure on $[0,1)$ defined by
$$dv(t)=(1-t)^{1-\alpha}d\mu(t). $$
Then the following conditions are equivalent.
\begin{enumerate}
\item[(1)] $\mu$ is a $\alpha$-Carleson measure.

\item[(2)] $v$ is a Carleson measure.
\end{enumerate}
\end{lemma}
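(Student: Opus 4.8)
The plan is to reduce everything to the description of $s$-Carleson measures on $[0,1)$ recalled in the introduction, namely that a positive measure is an $s$-Carleson measure precisely when there is a constant $C$ with $\mu([t,1))\leq C(1-t)^s$ for all $t\in[0,1)$. Under this reformulation, condition (1) reads $\mu([t,1))\leq C(1-t)^\alpha$, while condition (2) reads $v([t,1))\leq C(1-t)$, where
$$
v([t,1))=\int_{[t,1)}(1-s)^{1-\alpha}\,d\mu(s).
$$
I would then prove the two implications separately, the second being the routine one and the first carrying all the work.

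For (2)$\Rightarrow$(1) I would use only a pointwise estimate on the weight. Since $\alpha>1$, the exponent $1-\alpha$ is negative, so for every $s\in[t,1)$ we have $(1-s)^{1-\alpha}\geq(1-t)^{1-\alpha}$. Integrating this inequality against $d\mu$ over $[t,1)$ gives $v([t,1))\geq(1-t)^{1-\alpha}\mu([t,1))$, and hence $\mu([t,1))\leq(1-t)^{\alpha-1}v([t,1))\leq C(1-t)^\alpha$. This yields (1) at once.

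The substance lies in (1)$\Rightarrow$(2), where one must integrate the weight $(1-s)^{1-\alpha}$, which blows up as $s\to1$, against $\mu$. My plan is a dyadic decomposition of $[t,1)$: set $t_n=1-2^{-n}(1-t)$, so that $t_0=t$ and $t_n\uparrow 1$, and split the integral over the rings $[t_n,t_{n+1})$. On the $n$-th ring one has $1-s\geq 2^{-(n+1)}(1-t)$, so the weight obeys $(1-s)^{1-\alpha}\leq 2^{(n+1)(\alpha-1)}(1-t)^{1-\alpha}$, while the Carleson hypothesis (1) controls the mass by $\mu([t_n,t_{n+1}))\leq\mu([t_n,1))\leq C\,2^{-n\alpha}(1-t)^\alpha$. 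Multiplying the two bounds and tracking the exponents of $2$ collapses the $n$-th contribution to $C\,2^{\alpha-1}2^{-n}(1-t)$, and summing the geometric series in $n$ gives $v([t,1))\leq C\,2^{\alpha}(1-t)$, which is (2).

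The main obstacle is thus confined entirely to this last direction, and it is exactly the convergence of the geometric series: the series converges precisely because $\alpha>1$ forces the ring masses to decay ($2^{-n\alpha}$) faster than the weight grows ($2^{(n+1)(\alpha-1)}$), the two effects balancing to leave the summable factor $2^{-n}$. One could alternatively integrate by parts using the distribution function $F(s)=\mu([s,1))$, but the dyadic splitting avoids any regularity hypothesis on $\mu$ and keeps every constant explicit.
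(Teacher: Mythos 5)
Your proof is correct, and the two directions split exactly as you predicted: the easy implication matches the paper, while the substantive one takes a genuinely different route. For (2)$\Rightarrow$(1) the paper does precisely what you do, writing $d\mu(s)=(1-s)^{\alpha-1}dv(s)$ and using monotonicity of the weight on $[t,1)$ to pull out $(1-t)^{\alpha-1}$; this is your pointwise estimate in mirror form. For (1)$\Rightarrow$(2), however, the paper integrates by parts against the distribution function: it writes
$$
v([t,1))=(1-t)^{1-\alpha}\mu([t,1))+(\alpha-1)\int_t^1(1-s)^{-\alpha}\mu([s,1))\,ds,
$$
then uses $\mu([s,1))\lesssim(1-s)^{\alpha}$ to make the integrand bounded, so the integral contributes $\lesssim(\alpha-1)(1-t)$. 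This is exactly the alternative you mention and set aside. Your dyadic decomposition over the rings $[t_n,t_{n+1})$ with $t_n=1-2^{-n}(1-t)$ is equally valid (the exponent bookkeeping $2^{(n+1)(\alpha-1)}\cdot 2^{-n\alpha}=2^{\alpha-1}\,2^{-n}$ checks out, and the series sums to the explicit constant $C\,2^{\alpha}$), and it buys self-containedness: one never has to justify the integration-by-parts identity for a general positive Borel measure, which strictly speaking requires a Fubini or approximation argument the paper leaves implicit. The paper's route buys brevity, and the same parts formula reappears in its later arguments, so it is the more economical choice in context; but your version is the one that keeps every constant explicit and every step elementary.
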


\begin{proof}
(2)$\Rightarrow$(1) Note that $v([t,1)\lesssim (1-t)$ and $d\mu(t)=(1-t)^{\alpha-1}dv(t)$. We have
$$
\mu([t,1))=\int^1_t(1-s)^{\alpha-1}dv(s)\leq(1-t)^{\alpha-1}\int^1_tdv(s)\lesssim(1-t)^\alpha.
$$
(1)$\Rightarrow$(2) Note that $\mu([t,1))\lesssim (1-t)^\alpha$. Integrating by parts, we obtain
\begin{equation}\nonumber
\begin{split}
v([t,1))=&\int^1_t(1-s)^{1-\alpha}d\mu(s)\\
       =&(1-t)^{1-\alpha}\mu([t,1))+(\alpha-1)\int^1_t(1-s)^{-\alpha}\mu([s,1))ds\\
\lesssim &(1-t)+(\alpha-1)\int^1_tds\\
\lesssim &(1-t).
\end{split}
\end{equation}
The proof is complete.
\end{proof}

\vskip 2mm
\begin{lemma}\label{lem5} Let $f(z)=\sum^\infty_{n=0}a_nz^n \in \B^\alpha$  for any $\alpha>0$. Then
\begin{equation}\label{gs3.3}
\sup_{n}\sum^{2^{n+1}}_{k=2^n+1}\left|\frac{a_k}{k^{\alpha-1}}\right|^2<C\|f\|^2_{\B^\alpha}.
\end{equation}
\end{lemma}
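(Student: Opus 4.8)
The plan is to pass from the pointwise Bloch-type bound on $f'$ to an integral mean over circles via Parseval's identity, and then to integrate on a radius adapted to each dyadic block of frequencies. The whole argument is driven by a single inequality, after which only bookkeeping with the geometric factors $2^{\pm 2n\alpha}$ remains.

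First I would record, for $0<r<1$, the Parseval identity
$$\f{1}{2\pi}\int_0^{2\pi}|f'(re^{i\theta})|^2\,d\theta=\sum_{k=1}^\infty k^2|a_k|^2r^{2k-2}.$$
Since $f\in\B^\alpha$ gives $|f'(re^{i\theta})|\le \|f\|_{\B^\alpha}(1-r^2)^{-\alpha}$, the left side is at most $\|f\|_{\B^\alpha}^2(1-r^2)^{-2\alpha}$, so that
$$\sum_{k=1}^\infty k^2|a_k|^2r^{2k-2}\le \f{\|f\|_{\B^\alpha}^2}{(1-r^2)^{2\alpha}},\qquad 0<r<1.$$

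Next, fixing $n\ge 0$, I would specialize to the radius $r_n=1-2^{-(n+1)}$ and discard all the nonnegative terms outside the block $B_n=\{k:2^n+1\le k\le 2^{n+1}\}$. On $B_n$ one has $k\le 2^{n+1}$, whence $r_n^{2k-2}\ge (1-2^{-(n+1)})^{2^{n+2}}$; because $(1-1/m)^m\to e^{-1}$, this is bounded below by a constant $c>0$ independent of $n$ (the first few values of $n$, including $n=0$ where $r_0=1/2$, contribute only finitely many positive constants). Combined with $1-r_n^2\ge 1-r_n=2^{-(n+1)}$, the displayed inequality yields
$$\sum_{k\in B_n}k^2|a_k|^2\le \f{1}{c}\,\f{\|f\|_{\B^\alpha}^2}{(1-r_n^2)^{2\alpha}}\lesssim 2^{2n\alpha}\|f\|_{\B^\alpha}^2.$$

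Finally, since every $k\in B_n$ satisfies $k>2^n$, I would use $k^{-2\alpha}<2^{-2n\alpha}$ (valid for $\alpha>0$) to convert the block sum of $k^2|a_k|^2$ into the desired quantity:
$$\sum_{k\in B_n}\left|\f{a_k}{k^{\alpha-1}}\right|^2=\sum_{k\in B_n}k^2|a_k|^2\,k^{-2\alpha}\le 2^{-2n\alpha}\sum_{k\in B_n}k^2|a_k|^2\lesssim \|f\|_{\B^\alpha}^2,$$
with a bound independent of $n$; taking the supremum over $n$ gives \eqref{gs3.3}. I expect the only delicate point to be the uniform lower bound $r_n^{2k-2}\ge c$ across the whole block and over all $n$ simultaneously (together with a quick check of the first few blocks by hand); the matching of the powers $2^{-2n\alpha}$ from $k^{-2\alpha}$ against $2^{2n\alpha}$ from $(1-r_n^2)^{-2\alpha}$ is exactly what makes the estimate uniform in $n$.
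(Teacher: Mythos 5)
Your proof is correct and follows essentially the same route as the paper's: Parseval's identity combined with the pointwise bound $|f'(re^{i\theta})|\le \|f\|_{\B^\alpha}(1-r^2)^{-\alpha}$, a radius $r_n\approx 1-2^{-n}$ adapted to each dyadic block, and the matching of $k^{-2\alpha}\le 2^{-2n\alpha}$ against $(1-r_n^2)^{-2\alpha}\lesssim 2^{2n\alpha}$. You merely make explicit the uniform lower bound $r_n^{2k-2}\ge c$ on the block, which the paper leaves implicit in its final sentence.
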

\begin{proof} For any $0<r<1$ and $f(z)=\sum^\infty_{k=0}a_k z^k\in\B^\alpha$, we have
$$
(1-r)^{2\alpha}\int^{2\pi}_0|f'(re^{i\theta})|^2d\theta\leq\|f\|^2_{\B^\alpha}.
$$
This gives that
$$
\sum^\infty_{k=1}k^2|a_k|^2r^{2k}\leq \|f\|^2_{\B^\alpha}(1-r)^{-2\alpha}.
$$
Choosing $r=1-2^{-n}$ for any fixed $n$, we obtain
\begin{equation}\label{gs3.4}
\sum^{2^{n+1}}_{k=2^n+1}k^2|a_k|^2(1-2^{-n})^{2k}\leq \|f\|^2_{\B^\alpha}2^{2\alpha n}.
\end{equation}
Then  \eqref{gs3.3} follows by \eqref{gs3.4}.
\end{proof}
\vskip 2mm

A complex sequence $\{\lambda_n\}^\infty_{n=0}$ is a multiplier from $l(2, \infty)$ to $l^1$ if and only if there exists a positive constant $C$
such that whenever $\{a_n\}^\infty_{n=0}\in l(2, \infty)$, we have $\sum^\infty_{n=0}|\lambda_na_n|\leq C\|\{a_n\}\|_{l(2, \infty)}$. $l(2,\infty)$ consists all the sequences $\{b_k\}^\infty_{k=0}$ for which
$$
\left\{\left(\sum^{2^{n+1}}_{k=2^n+1}|b_k|^2\right)^{1/2}\right\}^\infty_{n=0}\in l^\infty .
$$
The following result can be found in \cite{ke}.\\

\begin{lemma}\label{lem6} A complex sequence $\{\lambda_n\}^\infty_{n=0}$ is a multiplier from $l(2, \infty)$ to $l^1$ if and only if
$$
\sum^\infty_{n=1}\left(\sum^{2^{n+1}}_{k=2^n+1}|\lambda_k|^2\right)^{1/2}<\infty.
$$
\end{lemma}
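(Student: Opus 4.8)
The plan is to read the statement as a H\"older-type duality between two mixed-norm sequence spaces built on the dyadic blocks $I_n=\{2^n+1,\dots,2^{n+1}\}$, $n\ge 0$ (the finitely many initial indices affect neither the multiplier property nor the convergence of the series to be characterized, and may be ignored). For a sequence $c=\{c_k\}$ write $\|c\|_n=\big(\sum_{k\in I_n}|c_k|^2\big)^{1/2}$ for its $\ell^2$-norm over the $n$-th block. With this notation the outer norm on $l(2,\infty)$ is $\|a\|_{l(2,\infty)}=\sup_n\|a\|_n$, and the quantity in the lemma is precisely $\sum_n\|\lambda\|_n$. The whole argument then rests on Cauchy--Schwarz applied blockwise, used in one direction to bound the multiplier and in the other to build an extremal test sequence.

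For sufficiency, assume $\sum_n\|\lambda\|_n<\infty$ and let $a\in l(2,\infty)$. Cauchy--Schwarz on each block gives $\sum_{k\in I_n}|\lambda_k a_k|\le\|\lambda\|_n\|a\|_n\le\|\lambda\|_n\|a\|_{l(2,\infty)}$, and summing over $n$ yields
\[
\sum_{k}|\lambda_k a_k|\le\|a\|_{l(2,\infty)}\sum_{n}\|\lambda\|_n .
\]
Hence $\{\lambda_n\}$ is a multiplier from $l(2,\infty)$ into $l^1$, with multiplier constant at most $\sum_n\|\lambda\|_n$.

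For necessity, suppose $\{\lambda_n\}$ is a multiplier with constant $C$. I would test against the sequence that saturates Cauchy--Schwarz on every block at once: put $a_k=\overline{\lambda_k}/\|\lambda\|_n$ for $k\in I_n$ whenever $\|\lambda\|_n>0$, and $a_k=0$ otherwise. Then $\|a\|_n\le 1$ for every $n$, so $a\in l(2,\infty)$ with $\|a\|_{l(2,\infty)}=1$, while $\sum_{k\in I_n}|\lambda_k a_k|=\|\lambda\|_n$. Summing over the blocks gives $\sum_k|\lambda_k a_k|=\sum_n\|\lambda\|_n$, and the multiplier inequality then forces $\sum_n\|\lambda\|_n\le C<\infty$.

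I do not expect a serious obstacle: the result is routine once the dyadic mixed-norm structure is recognized. The only point that needs care is the necessity direction, and it is precisely the feature that the outer norm on $l(2,\infty)$ is a supremum rather than an $\ell^p$-sum with $p<\infty$ that makes it clean --- one may normalize all blocks simultaneously to unit block-norm, so a \emph{single} test sequence extracts the entire sum $\sum_n\|\lambda\|_n$ and no truncation or limiting argument is required. The incidental bookkeeping (blocks with $\|\lambda\|_n=0$ and the finitely many unassigned indices) does not affect convergence and is dispatched trivially.
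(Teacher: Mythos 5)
Your proof is correct, but there is no proof in the paper to compare it against: the authors do not prove Lemma \ref{lem6} at all, they simply quote it from Kellogg \cite{ke}. So your contribution is a self-contained elementary argument for a cited black box, and it is the standard one: blockwise Cauchy--Schwarz gives sufficiency with multiplier constant at most $\sum_n\|\lambda\|_n$, and the block-normalized conjugate sequence $a_k=\overline{\lambda_k}/\|\lambda\|_n$ saturates Cauchy--Schwarz on every block simultaneously, giving necessity with the same constant --- so in fact the multiplier norm equals $\sum_n\|\lambda\|_n$. Your observation that the supremum structure of the $l(2,\infty)$ norm is what allows a \emph{single} test sequence (rather than a truncation-and-limit argument, which would be needed if the outer norm were an $\ell^p$ sum with $p<\infty$) is exactly the right point to isolate. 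One caveat worth stating explicitly: under the paper's literal definitions the dyadic blocks $\{2^n+1,\dots,2^{n+1}\}$, $n\ge 0$, never cover the indices $k=0,1$, and the sum in the lemma starts at $n=1$ and so also omits $k=2$; consequently a sequence with, say, $\lambda_0\neq 0$ can satisfy the summability condition without being a multiplier in the stated sense, since $a_0$ is invisible to the $l(2,\infty)$ norm. The equivalence therefore holds only modulo finitely many initial coordinates. You flag this and set it aside, which is the right call --- it is a defect of the paper's formulation (and is harmless in the paper's application, where only tail behavior matters), not of your argument.
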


\begin{theorem}\label{theorem2} Let $\mu$ be a positive measure on $[0,1)$ and $\alpha>1$. Then the following statements are equivalent.
\begin{enumerate}
\item[(1)] The measure $\mu$ is an $\alpha$-Carleson measure.

\item[(2)] The operator $\H_\mu$ is bounded from $\B^\alpha$ into $\B$.

\item[(3)] The operator $\H_\mu$ is bounded from $\B^\alpha$ into BMOA.
\end{enumerate}
\end{theorem}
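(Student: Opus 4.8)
The plan is to prove the cycle (3)$\Rightarrow$(2)$\Rightarrow$(1)$\Rightarrow$(3), which gives the full equivalence. The implication (3)$\Rightarrow$(2) is immediate: since $BMOA$ embeds continuously into $\B$, boundedness into $BMOA$ gives $\|\H_\mu f\|_\B\lesssim\|\H_\mu f\|_{BMOA}\lesssim\|f\|_{\B^\alpha}$, so nothing beyond this inclusion is needed. The content therefore lies in the necessity (2)$\Rightarrow$(1) and the sufficiency (1)$\Rightarrow$(3).

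For (2)$\Rightarrow$(1) I would test the operator on the single function $f_0(z)=(1-z)^{1-\alpha}$, which lies in $\B^\alpha$ because $(1-|z|^2)^\alpha|f_0'(z)|=(\alpha-1)(1-|z|^2)^\alpha/|1-z|^\alpha\le(\alpha-1)2^\alpha$. The first thing to extract is the integrability hypothesis of Lemma \ref{lem3}: since $\H_\mu$ is bounded, $\H_\mu(f_0)$ is a genuine analytic function, and (the Taylor coefficients $a_k$ of $f_0$ being nonnegative) its zeroth coefficient equals $\sum_k\mu_k a_k=\int_{[0,1)}(1-t)^{1-\alpha}d\mu(t)$ by Tonelli; finiteness of this coefficient forces $\int_{[0,1)}(1-t)^{1-\alpha}d\mu<\infty$. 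By Lemma \ref{lem3} we then have $\H_\mu(f)=I_\mu(f)$ on all of $\B^\alpha$ and may differentiate under the integral. Evaluating at a real $z=r\in[1/2,1)$ and keeping only the tail $t\in[r,1)$, where $t\ge 1/2$, $(1-t)^{1-\alpha}\ge(1-r)^{1-\alpha}$ and $1-tr\le 1-r^2\le 2(1-r)$, gives
\begin{equation}\nonumber
(I_\mu f_0)'(r)=\int_{[0,1)}\frac{t(1-t)^{1-\alpha}}{(1-tr)^2}d\mu(t)\ge\frac{1}{8}(1-r)^{-1-\alpha}\mu([r,1)).
\end{equation}
Multiplying by $1-r^2$ and using $\H_\mu f_0=I_\mu f_0\in\B$ yields $\mu([r,1))\le 8\|\H_\mu f_0\|_\B(1-r)^\alpha$, which together with the trivial bound for $r<1/2$ is exactly the $\alpha$-Carleson condition.

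For (1)$\Rightarrow$(3) I would first pass, via Lemma \ref{lem4}, from ``$\mu$ is $\alpha$-Carleson'' to ``$v$ is a Carleson measure'', where $dv(t)=(1-t)^{1-\alpha}d\mu(t)$; in particular $\int dv<\infty$, so Lemma \ref{lem3} again gives $\H_\mu(f)=I_\mu(f)$ on $\B^\alpha$. The argument then parallels (5)$\Rightarrow$(3) of Theorem \ref{theorem1}: for $g\in H^1$ and $0<r<1$ one has
\begin{equation}\nonumber
\int_0^{2\pi}I_\mu(f)(re^{i\theta})\overline{g(e^{i\theta})}\,d\theta=\int_{[0,1)}f(t)\overline{g(rt)}\,d\mu(t),
\end{equation}
the Fubini step being legitimate because $|f(t)|\le C\|f\|_{\B^\alpha}(1-t)^{1-\alpha}$ by Lemma \ref{lem1} and $v$ is finite. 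Bounding $|f(t)|$ by Lemma \ref{lem1} turns the right-hand side into $C\|f\|_{\B^\alpha}\int_{[0,1)}|g(rt)|\,dv(t)$, and the Carleson property of $v$ gives $\int_{[0,1)}|g(rt)|\,dv(t)\le\|v\|\,\|g\|_{H^1}$. Hence $|\int_0^{2\pi}I_\mu(f)(re^{i\theta})\overline{g(e^{i\theta})}d\theta|\le C\|v\|\|f\|_{\B^\alpha}\|g\|_{H^1}$ for every $g\in H^1$, and Fefferman's duality theorem yields $\H_\mu(f)=I_\mu(f)\in BMOA$ with $\|\H_\mu f\|_{BMOA}\lesssim\|v\|\|f\|_{\B^\alpha}$, i.e. $\H_\mu:\B^\alpha\to BMOA$ is bounded.

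The delicate points, and where I expect the real work to sit, are the two ``well-definedness'' passages rather than the inequalities: in (2)$\Rightarrow$(1) one must argue that boundedness of the matrix operator \emph{by itself} forces $\int_{[0,1)}(1-t)^{1-\alpha}d\mu<\infty$, so that the integral representation $\H_\mu=I_\mu$ and differentiation under the integral sign are justified; and in (1)$\Rightarrow$(3) one must verify the Fubini hypotheses for the pairing identity when $f$ is unbounded (recall $\alpha>1$), which is precisely what the finiteness of $v$ secures. The dyadic-block estimate of Lemma \ref{lem5} and the $l(2,\infty)\to l^1$ multiplier criterion of Lemma \ref{lem6} are not needed along the route I have sketched; I would reserve them for the sharper essential-norm statements.
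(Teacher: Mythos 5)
Your implication (2)$\Rightarrow$(1) is correct and is a genuinely different (and arguably cleaner) argument than the paper's: the paper tests $\H_\mu$ on the family $f_\lambda(z)=(1-\lambda^2)(1-\lambda z)^{-\alpha}$ and invokes the coefficient criterion of Lemma \ref{lem2}, whereas your single test function $f_0(z)=(1-z)^{1-\alpha}$, having nonnegative Taylor coefficients, yields both the integrability hypothesis of Lemma \ref{lem3} (Tonelli applied to the zeroth coefficient) and, after differentiating $I_\mu(f_0)$, the uniform bound $\mu([r,1))\le 8\|\H_\mu f_0\|_{\B}(1-r)^\alpha$. For this particular $f_0$ the identification $\H_\mu(f_0)=I_\mu(f_0)$ is legitimate precisely because all coefficients are nonnegative, so every interchange of sum and integral is Tonelli.

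However, there is a genuine gap in your (1)$\Rightarrow$(3), located at the sentence ``Lemma \ref{lem3} again gives $\H_\mu(f)=I_\mu(f)$ on $\B^\alpha$,'' and your closing claim that Lemmas \ref{lem5} and \ref{lem6} are dispensable is exactly wrong. Lemma \ref{lem3} only asserts that $I_\mu(f)$ is a well-defined analytic function; it says nothing about the matrix operator $\H_\mu$. To identify the two --- indeed, even to know that the series $\sum_k\mu_{n+k}a_k$ defining the coefficients of $\H_\mu(f)$ converge --- you must interchange $\sum_k$ and $\int_{[0,1)}$ for an \emph{arbitrary} $f=\sum_k a_kz^k\in\B^\alpha$, whose coefficients have mixed signs, and this requires the absolute convergence $\sum_k\mu_{n+k}|a_k|<\infty$. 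The obvious bounds fail at the endpoint: $|a_k|\lesssim k^{\alpha-1}\|f\|_{\B^\alpha}$ together with $\mu_k\lesssim k^{-\alpha}$ (which is what the $\alpha$-Carleson hypothesis gives) yields only $\sum_k\mu_{n+k}|a_k|\lesssim\sum_k k^{-1}=\infty$. This is precisely the step where the paper invokes Lemma \ref{lem5} (for $f\in\B^\alpha$ the sequence $\{a_k/k^{\alpha-1}\}$ lies in $l(2,\infty)$, which is strictly stronger than boundedness) and Kellogg's criterion, Lemma \ref{lem6} (the sequence $\{\mu_kk^{\alpha-1}\}\lesssim\{1/k\}$ is a multiplier from $l(2,\infty)$ to $l^1$, although it is \emph{not} a multiplier from $l^\infty$ to $l^1$), to conclude $\sum_k\mu_k|a_k|\lesssim\|f\|_{\B^\alpha}$; only then does Fubini give $\H_\mu(f)=I_\mu(f)$, after which your duality argument (identical to the paper's) runs verbatim. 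Without this step your route proves boundedness of $I_\mu:\B^\alpha\to BMOA$, but statement (3) is about $\H_\mu$, so the dyadic-block/multiplier machinery cannot be postponed to the essential-norm theorems.
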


\begin{proof}
(3)$\Rightarrow$ (2). It is trivial.

  (2)$\Rightarrow$(1).  We suppose that $\H_\mu$ is bounded from $\B^\alpha$ into $\B$ for $\alpha>1$. For any $0<\lambda<1$, let
\begin{equation}\label{gs3.5}
f_\lambda(z)=\frac{1-\lambda^2}{(1-\lambda z)^\alpha}=\sum^\infty_{k=0}a_{k,\lambda}z^n,
\end{equation}
where $a_{k,\lambda}=O(k^{\alpha-1}\lambda^k)$. It is easy to see that $f_\lambda\in\B^\alpha.$ Then
$$
\H_\mu(f)(z)=\sum^\infty_{n=0}\left(\sum^\infty_{k=0}\mu_{n,k}a_k\right)z^n\in \B.
$$
Lemma \ref{lem2} gives that
\begin{equation}\nonumber
\begin{split}
\infty>&\sup_nn\sum^\infty_{k=0}\mu_{n,k}a_{k,\lambda}\\
=&\sup_nn(1-\lambda^2)\sum^\infty_{k=0}k^{\alpha-1}\lambda^k\int^1_0t^{n+k}d\mu(t)\\
\geq&\sup_n n(1-\lambda^2)\sum^\infty_{k=0}k^{\alpha-1}\lambda^k\int^1_\lambda t^{n+k}d\mu(t)\\
\geq&\sup_nn(1-\lambda^2)\lambda^n\mu([\lambda,1))\sum^\infty_{k=0}k^{\alpha-1}\lambda^{2k}\\
=&\sup_n n\lambda^n\frac{1-\lambda^2}{(1-\lambda^2)^\alpha}\mu([\lambda,1)).
\end{split}
\end{equation}
We choose $n$ such that $1-\frac{1}{n+1}\leq\lambda<1-\frac{1}{n}.$ We have
\begin{equation}\label{gs3.6}
\infty>\frac{1}{e(1-\lambda^2)^\alpha}\mu([\lambda,1)).
\end{equation}
So $\mu$ is an $\alpha-$Carleson measure.

(1)$\Rightarrow$(3). Assume that the condition (1) holds.
Lemma \ref{lem3} shows that $I_\mu(f)$ is analytic on $\D$. Let $f(z)=\sum^\infty_{n=0}a_nz^n\in\B^\alpha.$
By Lemma \ref{lem5} we  have the sequence $\{a_k/k^{\alpha-1}\}\in l(2,\infty)$.
Since $\mu$ is a $\alpha$-Carleson measure, we have $\mu_{k}\leq\frac{C}{k^\alpha}$ by Lemma 2.7 in \cite{gm}.
There exists a constant $C$ such that
$$
\sum^\infty_{n=1}\left(\sum^{2^{n+1}}_{k=2^n+1}(\mu_kk^{\alpha-1})^2\right)^{1/2}\lesssim
\sum^\infty_{n=1}\left(\sum^{2^{n+1}}_{k=2^n+1}\frac{1}{k^2}\right)^{1/2}
\lesssim\sum^\infty_{n=1}\frac{1}{2^{n/2}}<\infty.
$$
This shows that the sequence
$\{\mu_k k^{\alpha-1}\}$ is a multiplier from $l(2, \infty)$ to $l^1$ by Lemma \ref{lem6}. Note that $\{\mu_n\}^\infty_{n=1}$  is  a decreasing sequence of positive numbers. Given any $f(z)=\sum^\infty_{n=0}a_nz^n\in\B^\alpha$ for $\alpha>1$, we have
\begin{equation}\nonumber
\begin{split}
\sum^\infty_{k=1}|\mu_{n+k}a_k|&\leq\sum^\infty_{k=1}|\mu_{k}a_k|\leq\sum^\infty_{k=1}\frac{\mu_{k}}{k^{1-\alpha}}\frac{|a_k|}{k^{\alpha-1}}\\
&\leq C\sup_n\left(\sum^{2^{n+1}-1}_{k=2^n}\frac{|a_k|^2}{k^{2(\alpha-1)}}\right)^{1/2}<C\|f\|_{\B^\alpha}.
\end{split}
\end{equation}
This implies that $\H_\mu(f)(z)$ is well defined for all $z\in\D$ and $\H_\mu(f)$ is an analytic function in $\D$. Applying Fubini's Theorem, we get
\begin{equation}\nonumber
\begin{split}
\H_\mu(f)(z)=&\sum^\infty_{n=0}\left(\sum^\infty_{k=0}\mu_{n+k}a_k\right)z^n=\sum^\infty_{k=0}a_k\left(\sum^\infty_{n=0}\mu_{n+k}z^n\right)\\
=&\sum^\infty_{k=0}a_k\left(\sum^\infty_{n=0}\int_{[0,1)}t^{n+k}z^nd\mu(t)\right)\\
=&\sum^\infty_{k=0}\int_{[0,1)}\left(\sum^\infty_{n=0} t^nz^n\right)a_kt^k d\mu(t)\\
=&\int_{[0,1)}\sum^\infty_{k=0}\frac{a_kt^k}{1-tz}d\mu(t)=I_\mu(f)(z).
\end{split}
\end{equation}

Note that $|f(t)|\lesssim(1-t)^{1-\alpha}$ by Lemma \ref{lem1}. Applying \eqref{gs2.1} and Lemma \ref{lem4}, we have
\begin{equation}\nonumber
\begin{split}
\left|\int^{2\pi}_0I_\mu(f)(re^{i\theta})\overline{g(e^{i\theta})}d\theta\right|=&\left|\int_{[0.1)}f(t)\overline{g(rt)}d\mu(t)\right|\\
\leq &\|f\|_{\B^\alpha}\int_{[0.1)}|g(rt)|(1-t)^{1-\alpha}d\mu(t)\\
\leq &\|\mu\|\|f\|_{\B^\alpha}\int^{2\pi}_0|g(re^{i\theta})|d\theta\\
\leq &\|\mu\|\|f\|_{\B^\alpha}\|g\|_{H^1}.\\
\end{split}
\end{equation}
We obtain $\H_\mu(f)=I_\mu(f)\in BMOA$ by Fefferman's duality Theorem for any $f\in\B^\alpha, \alpha>1$. The proof is complete.
\end{proof}

\begin{theorem}\label{theorem3} Let $\mu$ be a positive measure on $[0,1)$ and $\alpha>1$. Then the following statements are equivalent.
\begin{enumerate}
\item[(1)] The measure $\mu$ is a vanishing $\alpha$-Carleson measure.

\item[(2)] The operator $\H_\mu$ is compact from $\B^\alpha$ spaces into $\B$.

\item[(3)] The operator $\H_\mu$ is compact from $\B^\alpha$ spaces into $BMOA$.

\end{enumerate}
\end{theorem}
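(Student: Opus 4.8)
The plan is to establish the cycle $(3)\Rightarrow(2)\Rightarrow(1)\Rightarrow(3)$, paralleling the boundedness result of Theorem \ref{theorem2} but extracting a quantitative, boundary-localized version of the Carleson condition. The implication $(3)\Rightarrow(2)$ is immediate: since the inclusion $BMOA\hookrightarrow\B$ is continuous (indeed $\|h\|_\B\lesssim\|h\|_{BMOA}$), composing the compact operator $\H_\mu:\B^\alpha\to BMOA$ with this bounded inclusion shows that $\H_\mu:\B^\alpha\to\B$ is compact.

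For $(2)\Rightarrow(1)$ I would reuse the test functions $f_\lambda(z)=\frac{1-\lambda^2}{(1-\lambda z)^\alpha}$ from \eqref{gs3.5}, which satisfy $\sup_{0<\lambda<1}\|f_\lambda\|_{\B^\alpha}<\infty$ and $f_\lambda\to0$ uniformly on compact subsets of $\D$ as $\lambda\to1$. First I would record the standard consequence of compactness: if $\{g_j\}$ is bounded in $\B^\alpha$ and $g_j\to0$ uniformly on compacta, then $\|\H_\mu(g_j)\|_\B\to0$. This follows by a subsequence argument, since every subsequence of $\{\H_\mu(g_j)\}$ has a norm-convergent sub-subsequence whose limit must be $0$: compactness of $\H_\mu$ (hence boundedness, hence $\mu$ is $\alpha$-Carleson, so the representation $\H_\mu(g_j)=I_\mu(g_j)$ of Lemma \ref{lem3} applies) together with dominated convergence gives $I_\mu(g_j)(z)\to0$ pointwise, and norm convergence in $\B$ forces pointwise convergence. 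Applying this to the family $f_\lambda$ gives $\|\H_\mu(f_\lambda)\|_\B\to0$ as $\lambda\to1$. On the other hand, the coefficients $c_n(\lambda)=\sum_{k\ge0}\mu_{n+k}a_{k,\lambda}$ of $\H_\mu(f_\lambda)$ are positive and decreasing in $n$ (as $\mu_{n+k}$ decreases in $n$ and $a_{k,\lambda}\ge0$), so the quantitative form of Lemma \ref{lem2} plus the lower estimate already computed in the proof of Theorem \ref{theorem2} yields, for the choice $1-\frac{1}{n+1}\le\lambda<1-\frac{1}{n}$, the bound $\|\H_\mu(f_\lambda)\|_\B\gtrsim\sup_n n\,c_n(\lambda)\gtrsim\frac{\mu([\lambda,1))}{(1-\lambda^2)^\alpha}$. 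Letting $\lambda\to1$ forces $\frac{\mu([\lambda,1))}{(1-\lambda^2)^\alpha}\to0$, i.e. $\mu$ is a vanishing $\alpha$-Carleson measure.

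The implication $(1)\Rightarrow(3)$ is the heart of the matter. First I would upgrade Lemma \ref{lem4}: an integration by parts identical to its proof, now using $\mu([t,1))=o((1-t)^\alpha)$, shows that if $\mu$ is a vanishing $\alpha$-Carleson measure then $v$, defined by $dv(t)=(1-t)^{1-\alpha}d\mu(t)$, is a vanishing Carleson measure. Then, given a bounded sequence $\{f_n\}\subset\B^\alpha$ with $f_n\to0$ uniformly on compacta, I would show $\|\H_\mu(f_n)\|_{BMOA}\to0$ through \eqref{gs2.1} and Fefferman duality, with $\H_\mu(f_n)=I_\mu(f_n)$. Fixing $\varepsilon>0$, I would choose $\rho\in(0,1)$ so that the truncated measure $v|_{[\rho,1)}$ has Carleson constant at most $\varepsilon$ (possible since $v$ is vanishing Carleson), and split the pairing integral $\int_{[0,1)}f_n(t)\overline{g(rt)}\,d\mu(t)$ at $\rho$. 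On $[0,\rho)$ the factor $\sup_{[0,\rho]}|f_n|\to0$ controls the contribution (using that $\mu$ is itself a Carleson measure), while on $[\rho,1)$ the growth bound $|f_n(t)|\lesssim(1-t)^{1-\alpha}$ from Lemma \ref{lem1} converts $d\mu$ into $dv$, so that the small Carleson constant of $v|_{[\rho,1)}$ together with the $H^1$-Carleson embedding bounds the contribution by $C\varepsilon\|g\|_{H^1}$. Taking the supremum over $\|g\|_{H^1}\le1$ and over $r<1$ gives $\limsup_n\|\H_\mu(f_n)\|_{BMOA}\lesssim\varepsilon$, and letting $\varepsilon\to0$ establishes compactness.

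The main obstacle is the $(1)\Rightarrow(3)$ step, and within it the localization of the Carleson condition: one must transfer the vanishing $\alpha$-Carleson hypothesis on $\mu$ into a genuinely small Carleson constant for the truncated measure $v|_{[\rho,1)}$, and then pair the uniform-on-compacta smallness of $f_n$ on the inner interval $[0,\rho)$ against the small boundary Carleson norm on $[\rho,1)$. Keeping all estimates uniform in $r$, so that the bound survives the passage $r\to1$ in the $BMOA$ pairing, is the delicate bookkeeping of the argument.
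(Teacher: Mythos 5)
Your proposal is correct and takes essentially the same approach as the paper: the same test functions $f_\lambda$ from \eqref{gs3.5} with the quantitative lower bound from the proof of Theorem \ref{theorem2} for (2)$\Rightarrow$(1), and for (1)$\Rightarrow$(3) the same splitting of the $H^1$--$BMOA$ pairing, with the inner part controlled by uniform convergence on compacta and the tail controlled by the vanishing Carleson property of $dv(t)=(1-t)^{1-\alpha}d\mu(t)$. Your write-up is in fact a bit more careful than the paper's, which contains a typo in this step (defining $v$ with the logarithmic weight copied from Section 4 instead of $(1-t)^{1-\alpha}$) and states the iterated limits loosely.
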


\begin{proof}
(3)$\Rightarrow$(2). It is trivial.

(2)$\Rightarrow$(1). Suppose that $\H_\mu:\B^\alpha\rightarrow \B$ is compact. Let $f_{\lambda}$ be defined by \eqref{gs3.5}.
Then $\{f_{\lambda}\}$ is a bounded sequence in $\B^\alpha$ and $\lim_{n\rightarrow\infty}f_{\lambda_n}(z)=0$ on any compact subset of $\D$.
Then we have
$$
\lim_{n\rightarrow \infty}\|\H_\mu(f_{\lambda_n})\|_{\B^\alpha}=0.
$$
The proof of Theorem \ref{theorem2} gives that
$$
\|\H_\mu(f_{\lambda_n})\|_{\B^\alpha}\geq\frac{\mu([\lambda_n,1))}{e(1-\lambda_n^2)}.
$$
Consequently, $\mu$ is a vanishing Carleson measure by Lemma \ref{lem4}.

(1)$\Rightarrow$(3). Assume that $\mu$ is a vanishing $\alpha$-Carleson measure. The proof of the sufficiency for the boundedness gives that
$\H_\mu(f)=I_\mu(f)$ and
$$
\left|\int^{2\pi}_0 \H_\mu(f)(e^{i\theta})\overline{g(re^{i\theta})}d\theta\right|\leq\int_{[0.1)}|f(t)g(rt)|d\mu(t)
$$
for all $f\in\B^\alpha$ and $g\in H^1$.
Let $f_n$ be any sequence with $\sup_n\|f_n\|_{\B^\alpha}\leq 1$ and $\lim_{n\rightarrow\infty}f_n(z)=0$ on any compact subset of $\D$.
Then we have
\begin{equation}\label{gs3.7}
\int_{[0,r)}|f_n(t)g(rt)|d\mu(t)=0.
\end{equation}
Since  $v$ is a vanishing Carleson measure, where $v$ is defined by $dv(t)=\log\frac{e}{1-t}d\mu(t)$. we obtain
\begin{equation}\label{gs3.8}
\int_{[r,1)}|f_n(t)g(rt)|d\mu(t)\leq \int_{[0,1)}|g(rt)|dv_r(t)<\|v-v_r\|\|g\|_{H^1},
\end{equation}
where $dv_r(t)=\chi_{0<t<r}dv(t)$. It is well known that $v$ is a vanishing Carleson measure if and only if
$$\|v-v_r\|\rightarrow 0, r\rightarrow 1.$$
See p. 283 of \cite{zhu2}. Combining \eqref{gs3.7} and \eqref{gs3.8}, then
$$
\lim_{n\rightarrow\infty}\left(\lim_{r\rightarrow 1}\int_{[0,1)}|f_n(t)g(rt)|d\mu(t)\right)=0.
$$
This prove that $\lim_{n\rightarrow\infty}\H_\mu(f_n)=0$. So $\H_\mu$ is compact. The proof is complete.
\end{proof}

\begin{theorem}\label{theorem4}Let $\mu$ be a positive measure on $[0,1)$. If $\H_\mu$ is bounded from $\B^\alpha$ to $\B$ for any $\alpha>1$, then
\begin{equation}\label{gs3.10}
\|\H_\mu\|^{\B^\alpha\rightarrow\B}_e\approx\|\H_\mu\|^{\B^\alpha\rightarrow BMOA}_e\approx\limsup_{r\rightarrow 1^-}\frac{\mu([r,1))}{(1-r)^\alpha}.
\end{equation}
\end{theorem}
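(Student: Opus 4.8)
The plan is to write $L=\limsup_{r\to 1^-}\frac{\mu([r,1))}{(1-r)^\alpha}$ and to establish the chain
$$
L \lesssim \|\H_\mu\|^{\B^\alpha\to\B}_e \leq C\,\|\H_\mu\|^{\B^\alpha\to BMOA}_e \lesssim L,
$$
which forces the three quantities in \eqref{gs3.10} to be mutually comparable. The middle inequality is cheap: the inclusion $BMOA\hookrightarrow\B$ is bounded, so any operator compact into $BMOA$ is compact into $\B$, and $\|\H_\mu-K\|_{\B^\alpha\to\B}\leq C\|\H_\mu-K\|_{\B^\alpha\to BMOA}$; passing to the infimum over compact $K$ gives $\|\H_\mu\|^{\to\B}_e\leq C\|\H_\mu\|^{\to BMOA}_e$. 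Thus only the lower bound for the $\B$-essential norm and the upper bound for the $BMOA$-essential norm remain.

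For the lower bound I would reuse the test functions $f_\lambda(z)=\frac{1-\lambda^2}{(1-\lambda z)^\alpha}$ from \eqref{gs3.5}. They are uniformly bounded in $\B^\alpha$ and converge to $0$ uniformly on compact subsets of $\D$ as $\lambda\to 1^-$, so for every compact $K:\B^\alpha\to\B$ one has $\|Kf_\lambda\|_\B\to 0$, whence $\|\H_\mu-K\|_{\B^\alpha\to\B}\gtrsim\limsup_{\lambda\to 1}\|\H_\mu f_\lambda\|_\B$. The estimate already carried out in the proof of Theorem \ref{theorem2} (via Lemma \ref{lem2}, using that the Taylor coefficients $c_n=\sum_k\mu_{n+k}a_{k,\lambda}$ of $\H_\mu(f_\lambda)$ are positive and decreasing, so $\|\H_\mu f_\lambda\|_\B\asymp\sup_n n\,c_n$) yields $\|\H_\mu f_\lambda\|_\B\gtrsim\frac{\mu([\lambda,1))}{(1-\lambda^2)^\alpha}$. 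Taking the infimum over $K$ and using $(1-\lambda^2)^\alpha\approx(1-\lambda)^\alpha$ gives $\|\H_\mu\|^{\to\B}_e\gtrsim L$.

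The upper bound is the crux. For $0<r<1$ I would split $\mu=\mu_r+\mu^r$ into its restrictions to $[0,r)$ and $[r,1)$, so that $\H_\mu=\H_{\mu_r}+\H_{\mu^r}$ by linearity of the moments in $\mu$. Since $\mu_r([t,1))=0$ for $t\geq r$, the measure $\mu_r$ is a vanishing $\alpha$-Carleson measure, so $\H_{\mu_r}$ is compact from $\B^\alpha$ into $BMOA$ by Theorem \ref{theorem3}; taking $K=\H_{\mu_r}$ gives $\|\H_\mu\|^{\to BMOA}_e\leq\|\H_{\mu^r}\|_{\B^\alpha\to BMOA}$. It then remains to control the tail. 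Rerunning the sufficiency argument of Theorem \ref{theorem2} for $\mu^r$, its operator norm is dominated by the Carleson constant of the measure $v^r$ given by $dv^r(t)=(1-t)^{1-\alpha}d\mu^r(t)$. Carrying out the integration by parts of Lemma \ref{lem4} on the tail, but with the localized quantity $M_r:=\sup_{s\geq r}\frac{\mu([s,1))}{(1-s)^\alpha}$ in place of the global $\alpha$-Carleson constant, should yield $v^r([t,1))\lesssim M_r(1-t)$ uniformly in $t$, hence $\|\H_{\mu^r}\|_{\B^\alpha\to BMOA}\lesssim M_r$. Since $M_r$ is decreasing in $r$ with $\lim_{r\to 1^-}M_r=\limsup_{s\to 1}\frac{\mu([s,1))}{(1-s)^\alpha}=L$, letting $r\to 1^-$ produces $\|\H_\mu\|^{\to BMOA}_e\lesssim L$.

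The main obstacle I anticipate is precisely this tail estimate $\|\H_{\mu^r}\|_{\B^\alpha\to BMOA}\lesssim M_r$: one must revisit the boundedness proof of Theorem \ref{theorem2} with explicit bookkeeping of the constants so that the global $\alpha$-Carleson norm gets replaced by the localized $M_r$, and one must verify that the integration-by-parts bound of Lemma \ref{lem4}, applied to $\mu^r$, gives a Carleson constant for $v^r$ comparable to $M_r$ in both ranges $t\geq r$ and $t<r$. Once that is in hand, the three displayed inequalities close and \eqref{gs3.10} follows.
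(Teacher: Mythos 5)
Your proposal is correct and follows essentially the same route as the paper's proof: the same inclusion $BMOA\subset\B$ for the middle inequality, the same test functions \eqref{gs3.5} (with the coefficient estimate from the proof of Theorem \ref{theorem2}) for the lower bound, and the same truncation \eqref{gs3.9} with Theorem \ref{theorem3} giving compactness of $\H_{\mu_r}$ and the duality pairing \eqref{gs2.1} bounding the tail operator by the Carleson constant of $(1-t)^{1-\alpha}\,d\mu^r(t)$. The localized integration-by-parts estimate that you flag as the crux is precisely the step the paper leaves implicit, and your sketch of it (yielding $v^r([t,1))\lesssim M_r(1-t)$ uniformly in $t$, hence $\|\H_{\mu^r}\|_{\B^\alpha\rightarrow BMOA}\lesssim M_r\rightarrow L$) is the correct way to close it.
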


\begin{proof}
 For any $f\in\B^\alpha$, we have
$$
\|\H_\mu(f)\|^{\B^\alpha\rightarrow\B}\lesssim\|\H_\mu(f)\|^{\B^\alpha\rightarrow BMOA}.
$$
This gives that
$$
\|\H_\mu\|^{\B^\alpha\rightarrow\B}_e\lesssim\|\H_\mu\|^{\B^\alpha\rightarrow BMOA}_e.
$$
We now give the upper estimate of $\H_\mu$ from $\B^\alpha$ to BMOA. Since $\H_\mu$ is bounded from $\B^\alpha$ to $\B$, then the operator $\H_\mu$ from $\B^\alpha$ to BMOA is bounded and $\mu$ is an $\alpha-$Carleson measure by Theorem \ref{theorem2}. For any $0<r<1$, the positive measure $\mu_r$ is defined by
\begin{equation}\label{gs3.9}
\mu_r(t)=\begin{cases}
\mu(t), & \enspace 0\leq t\leq r, \\
0, &\enspace r<t<1, \end{cases}
\end{equation}
It is easy to check that $\mu_r$ is a vanishing $\alpha$-Carleson measure. We have $\H_{\mu_r}$ is compact from $\B^\alpha$ to BMOA by Theorem \ref{theorem3}. Then
\begin{equation}\label{gs3.10}
\|\H_\mu-\H_{\mu_r}\|^{\B^\alpha\rightarrow BMOA}=\inf_{\|f\|_{\B^\alpha}=1}\|\H_{\mu-\mu_r}(f)\|_{BMOA}.
\end{equation}
By (\ref{gs2.1}) we have
\begin{equation}\nonumber
\begin{split}
\left|\int^{2\pi}_0\H_\mu(f_n)(re^{i\theta})\overline{g(e^{i\theta})}d\theta\right|
\leq&\int_{[0.1)}\left|\overline{g(rt)}\right|(1-t)^{1-\alpha}d(\mu-\mu_r)(t)\\
\leq&\|v-v_r\|\|g\|_{H^1},
\end{split}
\end{equation}
for any $g\in H^1$, where $dv(t)=(1-t)^{1-\alpha}d\mu(t)$ and $dv_r(t)=(1-t)^{1-\alpha}d\mu_r(t)$. The above estimate gives
$$
\|\H_\mu\|^{\B^\alpha\rightarrow BMOA}_e\lesssim\limsup_{r\rightarrow 1^-}\frac{\mu([r,1))}{(1-r)^\alpha}.
$$

We now give the lower estimate of $\H_\mu$ from $\B^\alpha$ to $\B$. For any $0<\lambda<1$, let $f_\lambda$ be defined by \eqref{gs3.5}.
Then $f_\lambda\in\B^\alpha.$
Since $f_\lambda\rightarrow 0$ weakly in $\B^\alpha$,  we have $\|Kf_\lambda\|\rightarrow 0$ as $\lambda\rightarrow 1$ for any
compact operator $K$ on $\B^\alpha$. Moreover
\begin{eqnarray*}
\|\H_\mu-K\|^{\B^\alpha\rightarrow\B}\geq\|(\H_\mu-K)f_\lambda\|_{\B}\geq\|\H_\mu f_\lambda\|_\B-\|Kf_\lambda\|_\B.
\end{eqnarray*}
By the proof of Theorem \ref{theorem2} we have
\begin{equation}\nonumber
\begin{split}
\|\H_{\mu}(f_\lambda)\|_\B\geq\sup_nn\sum^\infty_{k=0}\mu_{n,k}a_{k,\lambda}\geq\sup_n nr^n\frac{1-\lambda^2}{(1-r\lambda)^\alpha}\mu([r,1)).
\end{split}
\end{equation}
Let $r=\lambda$ and we choose $n$ such that $1-\frac{1}{n+1}\leq\lambda<1-\frac{1}{n}$. We have
\begin{equation}
\|\H_{\mu}(f_\lambda)\|_\B>\frac{1}{e(1-\lambda^2)^\alpha}\mu([\lambda,1)).
\end{equation}
Then
$$
\|\H_\mu\|^{\B^\alpha\rightarrow\B}_e\geq\limsup_{\lambda\rightarrow 1^-}\|\H_\mu f_\lambda\|_\B\gtrsim\limsup_{r\rightarrow 1^-}\frac{\mu([r,1))}{(1-r)^\alpha}.
$$
The proof is complete.
\end{proof}

\section{Essential norm of $\H_\mu$ on $\B$}

The reader can refer\cite{gm, gm1} for the results of $\H_\mu:\B\rightarrow BMOA$ and  $\H_\mu:\B\rightarrow\B$. In this section, we  characterize the essential of norm of $\H_\mu$ on $\B$.  The following results will be needed in the proof of the main result.

\begin{lemma}\label{lemma6} \cite{gm} Let $\mu$ be a positive Borel measure on $[0,1)$. Let $v$ be the Borel measure on $[0,1$ defined by
$$
dv(t)=\log\frac{e}{1-t}d\mu(t)
$$
Then the following statements are equivalent.
\begin{enumerate}
\item[(1)] $v$ is a Carleson measure.

\item[(2)] $\mu$ is a $1-$logarithmic $1-$Carleson measure.
\end{enumerate}
\end{lemma}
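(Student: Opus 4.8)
The plan is to mimic the integration-by-parts argument of Lemma \ref{lem4}, replacing the power weight $(1-t)^{1-\alpha}$ there by the logarithmic weight $w(t)=\log\frac{e}{1-t}$. Under the identification of measures on $[0,1)$ with measures on $\D$, the assertion that $\mu$ is a $1$-logarithmic $1$-Carleson measure means
$$
\mu([t,1))\lesssim \frac{1-t}{\log\frac{e}{1-t}},\qquad t\to 1^-,
$$
while $v$ being a Carleson measure means $v([t,1))\lesssim 1-t$. Both hypotheses are tail estimates, so the whole argument reduces to comparing the tails of $\mu$ and of $v$, exactly as in Lemma \ref{lem4}.

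First I would dispose of (1)$\Rightarrow$(2), which needs only monotonicity of the weight. Since $s\mapsto\log\frac{e}{1-s}$ is increasing, for $s\geq t$ we have $\log\frac{e}{1-s}\geq\log\frac{e}{1-t}$, whence
$$
v([t,1))=\int_t^1\log\frac{e}{1-s}\,d\mu(s)\geq \log\frac{e}{1-t}\,\mu([t,1)).
$$
Combining this with the Carleson bound $v([t,1))\lesssim 1-t$ yields $\mu([t,1))\lesssim (1-t)/\log\frac{e}{1-t}$, which is precisely (2).

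The substantive direction is (2)$\Rightarrow$(1). Here I would integrate by parts as in Lemma \ref{lem4}, using $\frac{d}{ds}\log\frac{e}{1-s}=\frac{1}{1-s}$, to obtain
$$
v([t,1))=\log\frac{e}{1-t}\,\mu([t,1))+\int_t^1\frac{\mu([s,1))}{1-s}\,ds.
$$
The first term is $\lesssim 1-t$ directly from (2), since the factor $\log\frac{e}{1-t}$ cancels. For the integral, inserting (2) gives $\frac{\mu([s,1))}{1-s}\lesssim \frac{1}{\log\frac{e}{1-s}}\leq 1$, so $\int_t^1\frac{\mu([s,1))}{1-s}\,ds\lesssim 1-t$ as well; adding the two pieces gives $v([t,1))\lesssim 1-t$, i.e.\ (1).

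The one point requiring genuine care, and the main obstacle, is justifying the integration by parts: the boundary contribution $\log\frac{e}{1-s}\,\mu([s,1))$ at $s=1^-$ must vanish, even though $\log\frac{e}{1-s}\to\infty$. This is exactly where the logarithmic hypothesis (2) is essential, as it forces $\log\frac{e}{1-s}\,\mu([s,1))\lesssim 1-s\to 0$; a bare Carleson bound on $\mu$ would not suffice to kill this term. Once this limit is confirmed, the remainder is the routine tail comparison above.
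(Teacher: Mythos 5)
Your proof is correct. Note that the paper does not actually prove this lemma---it is quoted from \cite{gm} without proof---so the only internal point of comparison is Lemma \ref{lem4}, whose integration-by-parts scheme you explicitly and appropriately mimic: monotonicity of the weight for the easy direction, tail comparison for the other. Both of your directions are sound, including the bound $\int_t^1\frac{\mu([s,1))}{1-s}\,ds\lesssim 1-t$, which is exactly where the logarithmic gain in hypothesis (2) is spent. One remark on the point you flag as the main obstacle: the boundary term can be avoided altogether by writing $\log\frac{e}{1-s}=\log\frac{e}{1-t}+\int_t^s\frac{du}{1-u}$ for $s\in[t,1)$ and applying Tonelli's theorem, which yields the identity $v([t,1))=\log\frac{e}{1-t}\,\mu([t,1))+\int_t^1\frac{\mu((u,1))}{1-u}\,du$ unconditionally (all terms nonnegative, values in $[0,\infty]$, and $\mu((u,1))=\mu([u,1))$ for all but countably many $u$, so the integrals agree). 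Your treatment---using (2) to kill the limit $\log\frac{e}{1-s}\,\mu([s,1))\to 0$---is also valid, since the integration by parts is only invoked in the direction where (2) is assumed; the Tonelli route merely makes the step hypothesis-free, which is the same device implicitly underlying the paper's proof of Lemma \ref{lem4}.
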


\begin{lemma}\label{lemma7} \cite{gm} Let $\mu$ be a positive Borel measure on $[0,1)$. Then the following statements are equivalent.
\begin{enumerate}
\item[(1)] The measure $\mu$ is a vanishing $1-$logarithmic $1-$Carleson measure.

\item[(2)] The operator $\H_\mu$ is compact on $\B$.

\item[(3)] The operator $\H_\mu$ is compact from $\B$ to BMOA.
\end{enumerate}
\end{lemma}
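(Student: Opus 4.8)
The plan is to run, for the Bloch space $\B=\B^1$, the same two templates already used for $\alpha>1$ in Theorems \ref{theorem2} and \ref{theorem3}, with the weight $(1-t)^{1-\alpha}$ replaced throughout by $\log\f{e}{1-t}$. The device that makes this work is the vanishing version of Lemma \ref{lemma6}: writing $dv(t)=\log\f{e}{1-t}\,d\mu(t)$, the measure $\mu$ is a vanishing $1$-logarithmic $1$-Carleson measure if and only if $v$ is a vanishing Carleson measure. The whole analysis is then driven by the growth estimate $|f(t)|\le C\|f\|_\B\log\f{e}{1-t}$ valid for $f\in\B$ (the $\alpha=1$ substitute for Lemma \ref{lem1}), which gives $\int_{[0,1)}|f|\,d\mu\lesssim\|f\|_\B\,v([0,1))<\infty$ and hence, exactly as in Lemma \ref{lem3}, makes $I_\mu(f)$ a well-defined analytic function with $\H_\mu(f)=I_\mu(f)$ for every $f\in\B$. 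The implication (3)$\Rightarrow$(2) is then immediate: composing a compact operator $\H_\mu:\B\to BMOA$ with the bounded inclusion $BMOA\hookrightarrow\B$ yields a compact operator $\B\to\B$.

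For (1)$\Rightarrow$(3) I would copy the structure of the proof of (1)$\Rightarrow$(3) in Theorem \ref{theorem3}. Take a sequence $\{f_n\}$ bounded in $\B$ with $f_n\to0$ uniformly on compact subsets of $\D$. Using \eqref{gs2.1} and the growth estimate, for $g\in H^1$ one obtains
$$\left|\int_0^{2\pi}\H_\mu(f_n)(re^{i\theta})\overline{g(e^{i\theta})}\,d\theta\right|\le\int_{[0,1)}|f_n(t)|\,|g(rt)|\,d\mu(t),$$
which I split over $[0,r)$ and $[r,1)$. Over $[0,r)$ the uniform convergence of $f_n$ sends the integral to $0$ as $n\to\infty$; over $[r,1)$ the estimate $|f_n(t)|\lesssim\log\f{e}{1-t}$ bounds it by $\int_{[0,1)}|g(rt)|\,dv(t)\lesssim\|v-v_r\|\,\|g\|_{H^1}$, and $\|v-v_r\|\to0$ as $r\to1$ precisely because $v$ is a vanishing Carleson measure. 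The iterated limit then gives $\|\H_\mu(f_n)\|_{BMOA}\to0$, so $\H_\mu$ is compact into $BMOA$.

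The delicate implication is (2)$\Rightarrow$(1), and the genuine obstacle is to produce test functions that detect the logarithmic factor while still tending to $0$ on compacta. The localized family $\f{1-\lambda^2}{1-\lambda z}$ that worked for $\alpha>1$, and its obvious logarithmic variants, have Bloch norm comparable to the quantity they produce, so after normalizing to $\|\cdot\|_\B\approx1$ they recover only the plain Carleson quantity $\mu([\lambda,1))/(1-\lambda)$ and the logarithmic gain is absorbed by the normalization. My remedy is to \emph{square} the logarithm: set
$$f_\lambda(z)=\f{1}{\log\f{e}{1-\lambda^2}}\Bigl(\log\f{e}{1-\lambda z}\Bigr)^2,\qquad 0<\lambda<1.$$
A direct estimate of $f_\lambda'$ shows $\sup_{0<\lambda<1}\|f_\lambda\|_\B\lesssim1$ (squaring raises the pointwise size but not the Bloch seminorm), and clearly $f_\lambda\to0$ uniformly on compacta as $\lambda\to1$. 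For $t\in[\lambda,1)$ one has $\log\f{e}{1-\lambda t}\ge\log\f{e}{1-\lambda^2}$, so one full logarithm survives the normalization: $f_\lambda(t)\ge\log\f{e}{1-\lambda^2}$ on $[\lambda,1)$.

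Writing $\H_\mu(f_\lambda)=I_\mu(f_\lambda)$ and estimating the Bloch norm from below by $(1-\lambda^2)\,|(I_\mu f_\lambda)'(\lambda)|$, the positivity of $f_\lambda$ together with the restriction to $[\lambda,1)$ yields
$$\|\H_\mu(f_\lambda)\|_\B\gtrsim(1-\lambda^2)\int_\lambda^1\f{t\,f_\lambda(t)}{(1-\lambda t)^2}\,d\mu(t)\gtrsim\f{\log\f{e}{1-\lambda^2}}{1-\lambda^2}\,\mu([\lambda,1)).$$
Since $\{f_\lambda\}$ is bounded in $\B$ and tends to $0$ on compacta, compactness of $\H_\mu$ forces $\|\H_\mu(f_\lambda)\|_\B\to0$ as $\lambda\to1$, whence $\f{\mu([\lambda,1))\log\f{e}{1-\lambda}}{1-\lambda}\to0$; by the vanishing analogue of Lemma \ref{lemma6} this is exactly statement (1). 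The only points requiring real care are the uniform bound $\sup_\lambda\|f_\lambda\|_\B\lesssim1$ for the squared-logarithm family and the justification of $\H_\mu(f_\lambda)=I_\mu(f_\lambda)$ for Bloch data; everything else runs parallel to the proofs of Theorems \ref{theorem2} and \ref{theorem3}.
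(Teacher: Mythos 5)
You should first be aware that the paper contains no proof of this lemma to compare against: it is quoted verbatim from \cite{gm}, and the paper only reuses its ingredients elsewhere (your squared-logarithm test functions are exactly the family \eqref{gs2.6} used in the proof of Theorem \ref{theorem5}). Judged on its own, your skeleton is the right one and is consistent with how the paper argues in Theorems \ref{theorem3} and \ref{theorem5}: (3)$\Rightarrow$(2) by composing with the bounded inclusion $BMOA\hookrightarrow\B$; (1)$\Rightarrow$(3) by pairing against $H^1$ functions, splitting the integral, and using $\|v-v_s\|\to0$ for the vanishing Carleson measure $dv=\log\tfrac{e}{1-t}\,d\mu$; (2)$\Rightarrow$(1) by testing on $f_\lambda(z)=\log^{-1}\tfrac{e}{1-\lambda^2}\,\log^2\tfrac{e}{1-\lambda z}$, whose uniform Bloch bound, decay on compacta, and lower bound $f_\lambda(t)\ge\log\tfrac{e}{1-\lambda^2}$ on $[\lambda,1)$ you verify correctly.

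The genuine gap is the identification $\H_\mu(f)=I_\mu(f)$ on $\B$, which you assert follows ``exactly as in Lemma \ref{lem3}'' from $\int_{[0,1)}|f|\,d\mu\lesssim\|f\|_\B\,v([0,1))<\infty$. Finiteness of $\int|f|\,d\mu$ only makes $I_\mu(f)$ a well-defined analytic function; it does not justify the interchange $\sum_k a_k\mu_{n+k}=\int_{[0,1)} t^nf(t)\,d\mu(t)$ that equates the matrix series with the integral. For Bloch data the crude bounds $|a_k|\lesssim\|f\|_\B$ and $\mu_k\lesssim 1/(k\log k)$ leave $\sum_k|a_k|\mu_{n+k}$ possibly divergent, since $\sum_k 1/(k\log k)=\infty$; this is precisely why the paper's Theorem \ref{theorem2} needs the multiplier machinery rather than Lemma \ref{lem3}. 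Worse, in your (2)$\Rightarrow$(1) even the finiteness of $v([0,1))$ is not available a priori (it is essentially part of the conclusion), so that implication as written is circular. The repair uses the paper's own toolkit: compactness implies boundedness, so testing on $f\equiv1$ and invoking Lemma \ref{lem2} gives $\mu_n\lesssim 1/n$, whence $\mu$ is a Carleson measure and $v([0,1))<\infty$; then Lemma \ref{lem5} with $\alpha=1$ gives $\{a_k\}\in l(2,\infty)$, and since
\begin{equation*}
\sum^\infty_{n=1}\Bigl(\sum^{2^{n+1}}_{k=2^n+1}\mu_k^2\Bigr)^{1/2}\lesssim\sum^\infty_{n=1}2^{-n/2}<\infty,
\end{equation*}
Kellogg's Lemma \ref{lem6} shows $\{\mu_k\}$ is a multiplier from $l(2,\infty)$ to $l^1$, so $\sum_k|a_k|\mu_{n+k}<\infty$ and Fubini yields $\H_\mu(f)=I_\mu(f)$ for every $f\in\B$ --- the $\alpha=1$ specialization of the argument in Theorem \ref{theorem2}. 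With that step inserted (and cited in both directions where you use the integral representation), your three implications go through.
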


\begin{theorem}\label{theorem5} Let $\mu$ be an $1-$logarithmic $1-$Carleson measure on $[0,1)$. Then
\begin{equation}\label{gs2.4}
\|\H_\mu\|^{\B\rightarrow\B}_e\approx\|\H_\mu\|^{\B\rightarrow BMOA}_e\approx\limsup_{r\rightarrow 1^-}\frac{\mu(S([r,1)))\log\frac{e}{1-r}}{1-r}.
\end{equation}
\end{theorem}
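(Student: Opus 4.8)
The plan is to follow the same three-step scheme as in the proof of Theorem \ref{theorem4}, replacing the weight $(1-t)^{1-\alpha}$ by the Bloch weight $\log\frac{e}{1-t}$; that is, I work throughout with the measure $dv(t)=\log\frac{e}{1-t}d\mu(t)$, which is the natural companion of $\mu$ here by Lemmas \ref{lemma6} and \ref{lemma7}. Since $\mu$ is carried by $[0,1)$ we have $\mu(S([r,1)))=\mu([r,1))$, so the target quantity is $L:=\limsup_{r\to1^-}\frac{\mu([r,1))\log\frac{e}{1-r}}{1-r}$. Because $BMOA$ embeds continuously into $\B$, one has $\|\H_\mu f\|_\B\lesssim\|\H_\mu f\|_{BMOA}$ for every $f$, and hence, exactly as in Theorem \ref{theorem4}, $\|\H_\mu\|^{\B\to\B}_e\lesssim\|\H_\mu\|^{\B\to BMOA}_e$. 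It therefore suffices to prove the two bounds $\|\H_\mu\|^{\B\to BMOA}_e\lesssim L$ and $L\lesssim\|\H_\mu\|^{\B\to\B}_e$, which will close the chain of comparisons.

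For the upper bound I would truncate $\mu$ as in \eqref{gs3.9}: let $\mu_r$ be $\mu$ restricted to $[0,r]$. Since $\mu_r$ is trivially a vanishing $1$-logarithmic $1$-Carleson measure, Lemma \ref{lemma7} gives that $\H_{\mu_r}$ is compact from $\B$ to $BMOA$, so $\|\H_\mu\|^{\B\to BMOA}_e\le\|\H_{\mu-\mu_r}\|^{\B\to BMOA}$. To estimate the latter I would pair $\H_{\mu-\mu_r}(f)$ with $g\in H^1$ as in \eqref{gs2.1}, use the Bloch growth estimate $|f(t)|\lesssim\|f\|_\B\log\frac{e}{1-t}$, and observe that $\log\frac{e}{1-t}\,d(\mu-\mu_r)(t)=d(v-v_r)(t)$, where $v-v_r$ is the restriction of $v$ to $[r,1)$; the Carleson embedding of $H^1$ then yields $\|\H_{\mu-\mu_r}\|^{\B\to BMOA}\lesssim\|v-v_r\|$. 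Finally I would relate $\|v-v_r\|=\sup_{t\ge r}\frac{v([t,1))}{1-t}$ to $L$ by integrating by parts as in Lemma \ref{lem4}:
\begin{equation}\nonumber
v([t,1))=\log\frac{e}{1-t}\,\mu([t,1))+\int_t^1\frac{\mu([s,1))}{1-s}\,ds .
\end{equation}
Using $\log\frac{e}{1-s}\ge1$ this gives $\limsup_{t\to1}\frac{v([t,1))}{1-t}\le 2L$, hence $\lim_{r\to1}\|v-v_r\|\lesssim L$, and the upper bound follows.

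For the lower bound the essential point is to choose test functions that both produce the extra logarithmic factor after applying $\H_\mu$ and tend to $0$ weakly. I would take, for $0<\lambda<1$,
\begin{equation}\nonumber
f_\lambda(z)=\frac{1}{\log\frac{e}{1-\lambda^2}}\left(\log\frac{e}{1-\lambda z}\right)^2 .
\end{equation}
A direct differentiation shows $\|f_\lambda\|_\B\lesssim1$ uniformly, the normalization cancelling the logarithmic blow-up of the derivative, while $f_\lambda\to0$ uniformly on compact subsets of $\D$ with $\sup_\lambda\|f_\lambda\|_\B<\infty$; thus $f_\lambda\to0$ weakly in $\B$ and $\|Kf_\lambda\|_\B\to0$ for every compact $K$. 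The Taylor coefficients $a_{k,\lambda}$ of $f_\lambda$ are nonnegative, so the coefficients $c_n=\sum_k\mu_{n+k}a_{k,\lambda}$ of $\H_\mu(f_\lambda)$ are nonnegative and decreasing, and Lemma \ref{lem2} gives $\|\H_\mu(f_\lambda)\|_\B\gtrsim\sup_n nc_n$. Since $\mu_{n+k}\ge\lambda^{n+k}\mu([\lambda,1))$ and $\sum_k a_{k,\lambda}\lambda^k=f_\lambda(\lambda)=\log\frac{e}{1-\lambda^2}$, one gets $c_n\ge\lambda^n\mu([\lambda,1))\log\frac{e}{1-\lambda^2}$, and choosing $n$ with $1-\frac{1}{n+1}\le\lambda<1-\frac1n$ yields
\begin{equation}\nonumber
\|\H_\mu(f_\lambda)\|_\B\gtrsim\frac{\mu([\lambda,1))\log\frac{e}{1-\lambda^2}}{1-\lambda}\approx\frac{\mu([\lambda,1))\log\frac{e}{1-\lambda}}{1-\lambda}.
\end{equation}
Then $\|\H_\mu-K\|^{\B\to\B}\ge\|\H_\mu f_\lambda\|_\B-\|Kf_\lambda\|_\B$ for any compact $K$, and letting $\lambda\to1$ along a sequence realizing $L$ gives $L\lesssim\|\H_\mu\|^{\B\to\B}_e$.

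The hard part is precisely this construction of $f_\lambda$: the plain logarithm $\log\frac{e}{1-\lambda z}$ has bounded Bloch norm and supplies the factor $\log\frac{e}{1-\lambda}$ after $\H_\mu$, but it fails to converge to $0$ on compacta, whereas inserting a genuinely decaying factor such as $(1-\lambda^2)^{\varepsilon}(1-\lambda z)^{-\varepsilon}$ restores the decay but ruins the uniform Bloch bound (the derivative then picks up an unbounded term at $z=\lambda$). The squared-logarithm normalization reconciles the two demands, and the one estimate requiring genuine care is the verification that $\|f_\lambda\|_\B\lesssim1$, i.e. controlling $\frac{1-|z|^2}{|1-\lambda z|}\log\frac{e}{|1-\lambda z|}$ against $\log\frac{e}{1-\lambda^2}$ uniformly in $z$ and $\lambda$.
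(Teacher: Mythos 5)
Your proposal is correct and follows the same overall scheme as the paper's proof: the same initial reduction $\|\H_\mu\|^{\B\rightarrow\B}_e\lesssim\|\H_\mu\|^{\B\rightarrow BMOA}_e$ via the embedding $BMOA\subset\B$, the same truncation $\mu_r$ combined with Lemma \ref{lemma7} and the $H^1$--$BMOA$ pairing for the upper bound, and the same test functions $f_\lambda=\beta_\lambda\log^2\frac{e}{1-\lambda z}$ for the lower bound. Two sub-steps differ, and both differences are worth recording. First, in the upper bound the paper stops at the estimate $\lesssim\|v-v_r\|\,\|g\|_{H^1}$ and simply asserts the limsup bound; you supply the missing link, namely the integration by parts $v([t,1))=\log\frac{e}{1-t}\,\mu([t,1))+\int_t^1\frac{\mu([s,1))}{1-s}\,ds$ and the resulting inequality $\lim_{r\to 1}\|v-v_r\|\le 2L$, which is precisely the logarithmic analogue of Lemma \ref{lem4} that the paper never states. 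Second, for the lower bound the paper evaluates the derivative of $I_\mu(f_\lambda)$ at the point $\lambda$, getting $\|\H_\mu(f_\lambda)\|_\B\ge(1-\lambda^2)\left|\left(I_\mu(f_\lambda)\right)'(\lambda)\right|\ge\log\frac{e}{1-\lambda^2}\frac{\mu([\lambda,1))}{1-\lambda^2}$, whereas you argue through the Taylor coefficients $c_n=\sum_k\mu_{n+k}a_{k,\lambda}$ and Lemma \ref{lem2}, mirroring the paper's own proof of Theorem \ref{theorem4}; both computations give the same bound. The only caveat in your route is that Lemma \ref{lem2} is stated qualitatively, so you should note (as the paper itself implicitly does in Theorems \ref{theorem2} and \ref{theorem4}) that for nonnegative decreasing coefficients one has the quantitative inequality $\sup_n nc_n\lesssim\left\|\sum_n c_nz^n\right\|_\B$ with an absolute constant; your verification that the $c_n$ are indeed nonnegative and decreasing (positivity of the coefficients of $\log^2\frac{e}{1-\lambda z}$, monotonicity of the moments $\mu_n$) is exactly what makes this legitimate, and the paper's derivative evaluation is the way to bypass that check.
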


\begin{proof} For any $f\in\B$, we have
$$
\|\H_\mu(f)\|^{\B\rightarrow\B}\lesssim\|\H_\mu(f)\|^{\B\rightarrow BMOA}.
$$
This gives that
$$
\|\H_\mu\|^{\B\rightarrow\B}_e\lesssim\|\H_\mu\|^{\B\rightarrow BMOA}_e.
$$
We now give the upper estimate of $\H_\mu$ from $\B$ to BMOA. Since $\mu$ is an $1-$logarithmic $1-$Carleson measure on $[0,1)$, the operator $\H_\mu$ from $\B$ to BMOA is bounded
by Theorem 2.8 of \cite{gm}. For any $0<r<1$, let the positive measure $\mu_r$ defined by \eqref{gs3.9}.
It is easy to check that $\mu_r$ is a vanishing $1-$logarithmic $1-$Carleson measure. We have $\H_{\mu_r}$ is compact from $\B$ to BMOA by Lemma  \ref{lemma7}. Then
\begin{equation}
\|\H_\mu\|^{\B\rightarrow BMOA}_e\leq\|\H_\mu-\H_{\mu_r}\|^{\B\rightarrow BMOA}=\inf_{\|f\|_\B=1}\|\H_{\mu-\mu_r}(f)\|_{BMOA}.
\end{equation}
By \eqref{gs2.1} we have
\begin{equation}\nonumber
\begin{split}
&\left|\int^{2\pi}_0\H_\mu(f_n)(re^{i\theta})\overline{g(e^{i\theta})}d\theta\right|\\
\leq&\int_{[0.1)}\left|f_n(t)\overline{g(rt)}\right|d(\mu-\mu_r)(t)\\
\leq&\int_{[0.1)}\left|\overline{g(rt)}\right|\log\frac{e}{1-t}d(\mu-\mu_r)(t)\\
\lesssim&\|v-v_r\|\|g\|_{H^1},
\end{split}
\end{equation}
where $dv(t)=\log\frac{e}{1-t}d\mu(t)$ and $dv_r(t)=\log\frac{e}{1-t}d\mu_r(t)$. The positive measure $v-v_r$ is a Carleson measure by Lemma \ref{lemma6}. The above estimate gives
$$
\|\H_\mu\|^{\B\rightarrow BMOA}_e\lesssim\limsup_{\lambda\rightarrow 1^-}\frac{\mu([\lambda,1))\log\frac{e}{1-\lambda}}{1-\lambda}.
$$

We will give the lower estimate for $\H_\mu$. Let $0<\lambda<1$ and
\begin{equation}\label{gs2.6}
f_\lambda(z)=\beta_\lambda\log^2\frac{e}{1-\lambda z},
\end{equation}
where $\beta_\lambda=\log^{-1}\frac{e}{1-\lambda^2}$. Then $\{f_\lambda\}$  is a bounded sequence in $\B$ and $\lim_{\lambda\rightarrow 1^{-}}f_\lambda(z)=0$ on any compact subset of $\D$. Since $f_\lambda\rightarrow 0$ weakly in $\B$,  we have $\|Kf_\lambda\|\rightarrow 0$ as $\lambda\rightarrow 1$ for any
compact operator $K$ on $\B$. Moreover
\begin{eqnarray*}
\|\H_\mu-K\|^{\B\rightarrow\B}&\geq&\|(\H_\mu-K)f_\lambda\|_{\B}\geq\|\H_\mu f_\lambda\|_\B-\|Kf_\lambda\|_\B.
\end{eqnarray*}
Note that $\H_\mu(f_\lambda)=I_\mu(f_\lambda)$. We have
\begin{equation}\nonumber
\begin{split}
\|\H_\mu(f_\lambda)\|_\B\geq&(1-\lambda^2)\left|\left(I_\mu(f_\lambda)\right)'(\lambda)\right|\\
\geq &(1-\lambda^2)\int_\lambda^1\frac{f_\lambda(t)}{(1-t\lambda)^2}d\mu(t)\\
\geq &\log\frac{e}{1-\lambda^2}\frac{\mu([\lambda,1))}{1-\lambda^2}.
\end{split}
\end{equation}
The above estimate shows that
$$
\|\H_\mu-K\|^{\B\rightarrow\B}_e\geq\limsup_{\lambda\rightarrow 1^-}\|\H_\mu f_\lambda\|_\B\gtrsim\limsup_{\lambda\rightarrow 1^-}\frac{\mu([\lambda,1))\log\frac{e}{1-\lambda}}{1-\lambda}.
$$
The proof is complete.
\end{proof}

\end{document}